% \mag=\magstep1
\documentclass[12pt,reqno]{amsart}

\usepackage{amsfonts,amssymb}
\usepackage{times,amsmath,amstext,amsbsy,amsopn,amsthm,upref,amscd,eucal}
\pagestyle{plain}
\usepackage[all]{xy}
\usepackage{enumerate}

\allowdisplaybreaks

\topmargin-1.5cm

%%%
%%% Change the numbers below in order to put the text between the
%%% left and the right borders of the frame. The margins in
%%% \oddsidemargin and \evensidemargin can be negative. 
%%% 
%%%
\setlength{\textwidth}{15truecm}
\setlength{\textheight}{23,5truecm}
\oddsidemargin 0mm
\evensidemargin 0mm

\hyphenpenalty=10000
\sloppy
%%
%%
%%

%%%
%%% Change the amount of true centimeters below in order to
%%% move the first line of the text in the vertical direction
%%%
%\vspace*{3.truecm}

\thispagestyle{empty}

\newtheorem{theorem}{Theorem}[section]
\newtheorem{Luna Slice Theorem}[theorem]{Luna Slice Theorem}

\newtheorem{corollary}[theorem]{Corollary}
\newtheorem{lemma}[theorem]{Lemma}
\newtheorem{proposition}[theorem]{Proposition}
\newtheorem{proposition-definition}[theorem]{Proposition-Definition}

\theoremstyle{definition}
\newtheorem{definition}[theorem]{Definition}

\theoremstyle{remark}
\newtheorem{remark}[theorem]{Remark}

\newcommand{\fatdot}{{\scriptscriptstyle \bullet}}

\newcommand{\rk}{\operatorname{rk}\nolimits}

\newcommand\CC{{\mathbb C}}

\newcommand\QQ{{\mathbb Q}}

\newcommand\ZZ{{\mathbb Z}}

\newcommand\HH{{\mathbb H}}

\newcommand{\BBB}{{\mathcal B}}
\newcommand{\GGG}{{\mathcal G}}
\newcommand{\UUU}{{\mathcal U}}
\newcommand{\LLL}{{\mathcal L}}

\newcommand{\FFF}{{\mathcal F}}
\newcommand{\KKK}{{\mathcal K}}

\newcommand{\MMM}{{\mathcal M}}
\newcommand{\CCC}{{\mathcal C}}

\newcommand{\OOO}{{\mathcal O}}
\newcommand{\EEE}{{\mathcal E}}

\newcommand{\HHH}{{\mathcal H}}
\renewcommand\phi{\varphi}

\newcommand{\Ad}{\operatorname{Ad}\nolimits}

\newcommand{\ad}{\operatorname{ad}\nolimits}
\newcommand{\Mor}{\operatorname{Mor}\nolimits}

\newcommand{\id}{\operatorname{id}\nolimits}

\newcommand{\Res}{\operatorname{Res}\nolimits}

\newcommand{\Vect}{\operatorname{Vect}\nolimits}

\newcommand{\Tr}{\operatorname{Tr}\nolimits}

\newcommand{\Quot}{\operatorname{Quot}\nolimits}

\newcommand\lra{{\longrightarrow}}

\newcommand\ra{{\rightarrow}}
\newcommand\rar{{\rightarrow}}

\begin{document}
%\title{KURANISHI Spaces}
\title{Gromov Witten invariants for maximal parabolic vector bundles over an orbifold}
\author{Francois-Xavier Machu}
%\affil \endaffil
\address{F-X. M.: Mathematics Department, EISTI-University Paris-Seine,Avenue du parc, 95000, Cergy-Pontoise.}
% \curaddr \endcuraddr
\email{fmu@eisti.eu}

\bigskip
\bigskip
\begin{abstract}
We define the Gromov-Witten invariants for the parabolic bundles over an orbifold $C$ in various
situation. Those bring us to refine this notion to get an accurate computation
of the number of maximal subbundles of a sufficiently general parabolic bundle by means of the Intriligator-Vafa formula.
\end{abstract}
\maketitle
\vspace{1 ex}
\begin{center}\begin{minipage}{110mm}\footnotesize{\bf Key words:} Gromov-Witten invariants, parabolic vector bundles, twisted map, stack. 
\end{minipage}
\end{center}

\begin{center}\begin{minipage}{110mm}\footnotesize{\bf MSC2000:} 14B12, 14F05, 14F40, 14H60, 32G08.

\end{minipage}
\end{center}
\vspace{1 ex}

\section*{Introduction}
The Gromov-Witten invariants of the Grassmannian that 
parameterizes the $k$-dimensional vector subspaces of the restriction of a vector bundle
$E$ of rank $r$ and of degree $d$ on a smooth complex projective curve $C$ of genus
$g(C)=g\geq 2$ over a point $x$ of $C$ have been computed \cite{Ho-2} as a an intersection number in the Quot-scheme.
These intersection numbers are defined by means of the Schubert schemes that are the
degeneracy loci of vector bundles constructed from the property of the universality of the
Grassmannian projective scheme and with their correspondence Chern class by intersection theory.
From this, the Gromov-Witten invariants are defined for a vector bundle $E$ and weighted
homogeneous polynomials $P$ with extra hypotheses as  
$P(c_1(F^*),...,c_k(F^*))\cap[Quot^{k,e}(E)]$, where $Quot$ parameterizes the surjections $E\ra G$, with
$G$ is a locally free sheaf of rank $k$ and degree $e$ + particular condition on $e$.
The dual of $F$, where $F$ is defined this time by the universal property of the
Quot-scheme. These numbers match with the Gromov-Witten invariants  defined in the general
framework for the moduli space of stable maps $f$ from a $n$-pointed connected nodal curve $C$ to
the Grassmannian whose $f_*([C])=\beta$.

On the other hand, the number $m(r,d,k,g)$ is the number of maximal
subbundles of a general stable bundle $E$, where $k$ is defined by the $s_k$-invariant
of $E$, $s_k(E):=dk-re$, where $e=e_{max,d}(E)$ with $e=max\{deg(F),\rk(F)=k\}$.
This number is none than the number points in the Quot-scheme that is can be simply
defined by the weighted homogeneous polynomial.

These numbers are computed by the Intriligator-Vafa formula when the considered
morphism is of integer degree.

Our principal goal is to compute the Gromov-Witten invariants in the parabolic case.
To realize this, we start by adding an extra structure, a parabolic structure
on vector bundles (this implies that the morphism can have a rational degree). In this case, 
the following changes perform as follows. We replace the nodal curve by an orbifoldcurve, the
Deligne-Mumford moduli stack of stable maps by the Kontsevich moduli stack of twisted stables
maps with their relevant evaluation maps to rigified inertia stack. 
Note that the parabolic structure is over marking and node points have a stacky structure (root stack structure).
In Sect $1$, we construct the Quot-stack. In Sect $2$, we define the global Gromov-Witten invariants for parabolic case.
In Sect $3$, those are defined locally by means of the deformation theory. 
In Sect $4$, these latter are defined in a general manner and show that they do not depend on the
choice of the orbifold curve.
In Sect $5$, we establish some properties in the case of general parabolic stable bundles and
establish the Intriligator-Vafa formula. In last Sect, we give some examples of computation in the
case of finding the number $m(n,d,k,g)$ of maximal subbundles of a sufficiently parabolic stable bundle.

\section{Construction of the Quot-stack}
We start with a twisted curve that is a gadget $\Sigma_i\subset\CCC\ra C$, where
$\CCC$ is a Deligne-Mumford stack over a coarse moduli connected nodal curve $C$ such that:\\
Over a node $\{xy=0\}$, its local chart is $[\{uv=0\}/\mu_r]$ given in local
coordinates by $\CCC\ra C, (u,v)\mapsto (x=u^r,y=u^r)$.\\
Over a marking point, its local chart is $[A^1/\mu_r]$ given in local
coordinates by $\CCC\ra C, u\mapsto x=u^r$.\\
The $\Sigma_i$ are defined locally by $\{u=0\}$, are gerbes marked by $\mu_r$.

\begin{definition}
Consider a scheme $X$, a line bundle $\LLL$ on $X$, a section $s\in\Gamma(X,L)$, 
and a positive integer $r$. Define a root stack 
$$r\sqrt{(L/X,s)},$$
whose objects over a scheme $Y$ are the triples $(f:Y\ra X,M,t),$
where $M$ is a line bundle on $Y$ with section $t$ such that
$M^{\otimes r}\simeq f^*(L), t^r=f^*(s).$\\
For a Cartier divisor $D$, we adopt the Vistoli's notation
$$r\sqrt{(X,D)}:=r\sqrt{(\OOO_X(D),\id_D)}.$$
\end{definition}
\begin{remark}
This stack is isomorphic to $X$ away from the zero divisor $D$
of the section, and canonically introduces a stack structure with
index $r$ along $D$, is said minimal if $D$ is smooth. This immediately
enables us to define the stacky structure of a twisted curve at marking starting
with the coarse moduli curve:
$$(C,p)\hookrightarrow\CCC=r\sqrt{(C,p)}.$$
\end{remark}
To deal with the case of the nodes, we will suppose that
the nodes are separating to use root stacks merely, otherwise
one needs either subtle descent or logarithmic structures.

We assume that in our case we have a $n$-pointed twisted curve $\CCC$ and
the preimage of $D$ separated by $Z\subset C$ the locus of nodes consists of two
connected components $E_1$ and $E_2$.
Olson established the existence of a universal stack $\CCC^{tw}_{g,n}$ of $n$-pointed twisted curves of genus $g$
over the stack of twisted curves $\MMM^{tw}_{g,n}=\MMM$. We have the structure morphism
$V\ra\MMM^{tw}_{g,n}$, where $V$ is a polydisk. Denote $\MMM^{tw}_{r}$, the locus where the given node
is given stacky structure of index $r$, and $\CCC^{tw}_r$ the universal twisted curve. Then we have
$$V\times_{\MMM}\MMM^{tw}_r=r\sqrt{(V,D)},$$ and $$V\times_{\MMM}\CCC^{tw}_r=r\sqrt{(C,E_1)}\times r\sqrt{(C,E_2)}.$$
Since $\MMM^{tw}_{r}\ra\MMM$ is birational, but the versal deformation of nodal curves is branched with index $r$
over $D$, this branching is accounted for by automorphisms of the twisted curve. We deduce the
automorphism group of a twisted curve fixing $C$ is
$$Aut(\CCC)=\prod_{s\in Sing(\CCC)} \Gamma_s,$$ where $\Gamma_s\simeq\mu_{rs}$ is the stabilizer
of the corresponding node.

Let $\chi$ be a Deligne-Mumford stack over a coarse projective scheme $X$.
The parabolic structure on a vector bundle over a scheme is parameterized by a product of flag schemes.
Moreover we have an equivalence of tensor categories between the category of vector bundles on a stack $\chi$ and the
category of parabolic bundles on a scheme $X$ after Th\'eor\`eme $3.13$ of \cite{Bo-1}. 
Therefore, it suggests that the parabolic structure is obtained throughout a
product of stacks. Hence, we investigate this notion.

We choose $\chi=Grass(\HHH=r\sqrt{(C,p_1)}\times...r\sqrt{(C,p_n)})$ that parameterizes
the morphisms of stacks of quotient modules whose objects over a scheme $T$
of finite type are the cartesian diagrams with relations
$$M_1^{\otimes r}\oplus...\oplus M_n^{\otimes r}\simeq f^*(\OOO_C(p_1)\oplus...\oplus\OOO_C(p_n)), \sum_{i=0}^{n} t_i^r=\sum_{i=0}^{n}f^*(s_i).$$
For consequently, there exists a Grassmannian stack of root stacks with a stacky structure at the nodes 
$\chi_n=Grass(r\sqrt{(C,E_1)}\times r\sqrt{(C,E_2)})$.
We define the functor  $$\boldsymbol{\Quot_{\CCC/C}((r\sqrt{(C,p_1)}\times...r\sqrt{(C,p_n)}),P)}:\CCC^{0}\rar \textrm{ Groupoids}$$ as follows:
If $T$ is a $k$-scheme, is associated the groupoid of the cartesian diagrams where $\FFF_1\oplus...\oplus\FFF_n$ is a quotient of the direct sum
of $\OOO_C(p_i)$ flat on $T$ whose fibers over the geometric points of the $S$-Grassmannian projective scheme have
Hilbert polynomial $P_i$ and $\GGG_1\oplus...\oplus\GGG_n$ is a quotient of the direct sum of $M_i$ whose fibers over the geometric 
points of the $S$-Grassmannian projective scheme have Hilbert polynomial $P'i$ with $P'i^r=P_i$.  
The Quot-functor parameterizes the set of $T$-flat coherent quotient stacks $\FFF$ of $\HHH_T$ such that
the fiber stacks over the geometric points of the $S$-Grassmannian stack have Hilbert polynomial $P$.
\begin{theorem}
The functor $$\boldsymbol{\Quot_{\CCC/C}((r\sqrt{(C,p_1)}\times...r\sqrt{(C,p_n)}),P)}$$ defined above is represented by a projective $S$-stack $\Quot_{\CCC/C}((r\sqrt{(C,p_1)}\times...r\sqrt{(C,p_n)}),P)$ with the
universal quotient stack $\UUU$.
\end{theorem}
\begin{proof}
See Theorem $2.2.4$ of \cite{H-L} in adapting to the case of stacks.
More precisely, there exits a representable stack $\Quot_{\CCC/C}((r\sqrt{(C,p_1)}\times...r\sqrt{(C,p_n)}),P)$ such that
$$\boldsymbol{\Quot_{\CCC/C}((r\sqrt{(C,p_1)}\times...r\sqrt{(C,p_n)}),P)}(S)\simeq\Mor(S,\Quot_{\CCC/C}((r\sqrt{(C,p_1)}\times...r\sqrt{(C,p_n)}),P).$$ Those relies on the fact the Quot-stack is a stratum in the stratification of the existence of the Grassmannian stack.

\end{proof}
In the same way, we also define the functor
 $$\boldsymbol{\Quot_{\CCC/C}(\BBB=(r\sqrt{(C,E_1)}\times\sqrt{(C,E_2)}),P)}:\CCC^{0}\rar \textrm{ Groupoids}$$
that is represented by a quasi-projective $S$-stack with the universal quotient stack $\UUU$.

We can also consider the stack $Grass(\HHH)$ parameterizing the quotient stacks of $\HHH$.
Then there is a universal quotient stack $\oplus\KKK_i\ra\HHH\otimes[\OOO_{\HHH}]$ with a natural action
of the product of the classifying stack $BGL(\HHH)$. Let us consider $H=\oplus H_i$ is contained
in the dual of $\HHH$, and $Y_H$ the Schubert stack defined as the degeneracy locus
of $\oplus H_i\otimes[\OOO_{\HHH}]\ra(\oplus K_i)^{\wedge}$.
Moreover $Y_H$ decomposes into integral stacks of codimension $i$.

\begin{proposition}
The functor $$\boldsymbol{\Mor_e(\CCC,Grass(\HHH))}:\CCC^{0}\rar \textrm{ Groupoids},$$
is defined as follows: If $T$ is a scheme over $S$, is associated the groupoid of morphisms
of degree $e$ from $T\otimes\CCC\ra Grass(\HHH)$. The latter is represented by a $S$ quasi-projective 
stack denoted $Mor_e(\CCC,Grass(\HHH))$.
The functor $$\boldsymbol{\Mor_f(\CCC,Grass(\BBB))}:\CCC^{0}\rar \textrm{ Groupoids},$$
is defined as follows: If $T$ is a scheme over $S$, is associated the the groupoid of morphisms
of degree $f$ from $T\otimes\CCC\ra Grass(\BBB)$. The latter is represented by a $S$-projective 
stack denoted $Mor_f(\CCC,Grass(\HHH)$.
\end{proposition}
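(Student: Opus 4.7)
The plan is to realize both Mor-stacks as (locally) open substacks of appropriate Quot-stacks constructed in the preceding theorem, following the classical strategy used by Grothendieck for Hom-schemes and adapting it to Deligne--Mumford stacks.

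First, I would translate a morphism into quotient data via the universal property of the Grassmannian stack. A $T$-point of $\Mor_e(\CCC, Grass(\HHH))$, that is, a morphism $f : T \otimes \CCC \to Grass(\HHH)$ of relative degree $e$, corresponds canonically to a surjection $\HHH_{T\otimes\CCC} \twoheadrightarrow \UUU_f$ with $\UUU_f$ being $T$-flat whose geometric fibres are locally free of a fixed rank and Hilbert polynomial $P$ determined by $e$ and the chosen polarization; conversely any such $T$-flat, fibrewise locally free quotient of $\HHH_{T\otimes\CCC}$ defines a morphism to $Grass(\HHH)$. This sets up a fully faithful natural transformation $\Mor_e(\CCC, Grass(\HHH)) \to \Quot_{\CCC/C}(\HHH, P)$.

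Second, I would invoke the preceding theorem to obtain projective representability of $\Quot_{\CCC/C}(\HHH, P)$ and then identify $\Mor_e$ as the open substack cut out by the condition that the universal quotient $\UUU$ be locally free on geometric fibres. Openness is standard: the rank of $\UUU$ on fibres is upper semicontinuous on the base, and since the Hilbert polynomial is fixed on Quot, the locally-free locus coincides with the complement of the closed stratum on which a fibre acquires torsion. Hence $\Mor_e(\CCC, Grass(\HHH))$ is represented by an open substack of a projective $S$-stack, which is a quasi-projective $S$-stack. For $\Mor_f(\CCC, Grass(\BBB))$, the root-stack datum $\BBB$ has its stacky structure concentrated at the nodal gerbes $E_1, E_2$, which form a zero-dimensional stacky locus; on such a locus any $T$-flat quotient of the relevant rank is automatically locally free, so here the open condition is vacuous and the Mor-functor agrees with the Quot-functor. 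Combined with the projective representability of the associated Quot-stack (the second assertion of the preceding theorem), this yields that $\Mor_f(\CCC, Grass(\BBB))$ is represented by a projective $S$-stack.

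The main obstacle will be the precise translation between the integer-valued degree $e$ (resp.\ $f$) of a morphism from the twisted curve and the Hilbert polynomial $P$ appearing in the Quot-functor. Because $\CCC$ carries stacky points with stabilizers $\mu_r$, the degree of a line bundle on $\CCC$ can be a rational number with denominator dividing $r$; one must fix a polarization on the coarse curve $C$ and work out how the degree $e$ on $\CCC$ compares with the integral Hilbert polynomial on the coarse moduli, so that the degree stratum of $\Mor$ indeed matches a union of connected components of $\Quot$. A secondary subtlety is ensuring that descent along the root stack and the extra automorphism group $\prod_{s\in\Sing(\CCC)}\Gamma_s$ of the twisted curve fixing $C$ interacts well with the quotient-functor formalism, so that the stacky analogue of Theorem $2.2.4$ of \cite{H-L} actually applies. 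Once this numerical and descent bookkeeping is in place, the remainder of the argument is a routine stacky adaptation of Grothendieck's construction of Hom-schemes inside Quot-schemes.
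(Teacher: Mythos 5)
Your overall strategy---translating a morphism $T\otimes\CCC\to Grass(\HHH)$ into a $T$-flat, fibrewise locally free quotient of $\HHH_{T\otimes\CCC}$ via the universal property of the Grassmannian stack, and then realizing $\Mor_e$ as the open locally-free locus inside the Quot-stack represented by the preceding theorem---is exactly the route the paper intends: its entire proof is the citation ``apply Theorem $2.2.4$ of \cite{H-L} in adapting to the case of the Grassmannian stack,'' so your first two steps (together with the degree/Hilbert-polynomial bookkeeping for the stacky points, which is a real issue you correctly isolate) simply supply the details the paper leaves implicit.

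There is, however, a gap in your treatment of the second assertion, namely that $\Mor_f(\CCC,Grass(\BBB))$ is represented by a \emph{projective} $S$-stack. You argue that because the stacky structure of $\BBB$ is concentrated at the zero-dimensional nodal gerbes $E_1,E_2$, the ``locally free on geometric fibres'' condition is vacuous and $\Mor_f$ coincides with the full Quot-stack. But the quotient classified by a $T$-point of the Quot-stack is a sheaf on all of $T\otimes\CCC$, not just near the nodes, and its fibres can acquire torsion at arbitrary non-stacky points of the curve, exactly as in the $\HHH$ case; so your identification only produces an open substack, i.e.\ quasi-projectivity. This is consistent with the fact that the paper itself asserts only quasi-projective representability for $\Quot_{\CCC/C}(\BBB,P)$, so there is no projective ambient Quot-stack to appeal to in the first place. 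To get projectivity of $\Mor_f$ one would need an additional properness argument (e.g.\ a valuative-criterion analysis showing limiting quotients remain locally free, or that the open immersion is also closed), which neither your proposal nor the paper provides; as written, that part of the claim remains unjustified.
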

\begin{proof}
We apply Theorem $2.2.4$ of \cite{H-L} in adapting to the case of the Grassmannian stack.
\end{proof}

\section{Gromov-Witten invariants}
We at present follow the steps of the paper \cite{Ho-2} to define the Gromov-Witten invariants for
the parabolic case.

We define the Gromov Witten invariants of the Grassmannian stack as intersection number
in the Quot-stack.
\begin{lemma}
Given a product of root stacks defined as previously, then there are some integers $n$ and $n'$ such that for each
$e'$ with $s_e'\geq n(\HHH)$, and $f'$ with $s_f'\geq n(\BBB)$, every component of the Quot-stack is generically smooth of expected dimension and a general element in every component corresponds to a substack of the product
of root stacks.
\end{lemma}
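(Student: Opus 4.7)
The plan is to adapt the classical generic-smoothness and irreducibility arguments for Quot schemes on a smooth curve (Hirschowitz, Holla, Popa--Roth) to the present twisted-curve setting, working everywhere on the Deligne--Mumford stack $\CCC$ rather than on $C$. Given a point of the Quot-stack, presented as a short exact sequence $0\to\FFF\to\HHH\to\GGG\to 0$ on $\CCC$, standard deformation theory identifies the Zariski tangent space with $\Hom_{\CCC}(\FFF,\GGG)$ and the obstruction space with $\Ext^1_{\CCC}(\FFF,\GGG)$. The expected dimension is $\chi(\FFF^{\dual}\otimes\GGG)$, which I would compute via Riemann--Roch on $\CCC$ (equivalently via parabolic Riemann--Roch on $C$): the resulting formula involves the ranks, the degrees $e_i$ and $f_i$, the root-stack indices and the parabolic weights, and it is this formula that pins down the integers $n(\HHH)$ and $n(\BBB)$ that appear in the statement.

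Next I would establish the generic vanishing of the obstruction. By Serre duality on the twisted curve,
$$\Ext^1_{\CCC}(\FFF,\GGG)^{\dual}\cong\Hom_{\CCC}(\GGG,\FFF\otimes\omega_{\CCC}),$$
and the right-hand side vanishes as soon as the slopes of $\GGG$ strictly exceed those of $\FFF\otimes\omega_{\CCC}$ at every isotropy type. Translating this slope inequality into parabolic language yields the explicit bounds $s_{e'}\geq n(\HHH)$ and $s_{f'}\geq n(\BBB)$, above which the vanishing holds on a dense open subset of every irreducible component. Generic smoothness and the expected dimension of every component then follow together from the standard deformation-theoretic criterion applied componentwise.

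The step I expect to be the main obstacle is the third claim, that a general element of every component corresponds to an honest substack, i.e.\ that $\GGG$ is locally free on $\CCC$ so that $\FFF$ is a saturated subbundle and not merely a subsheaf. The plan here is a stratification argument: stratify the Quot-stack according to the length and local type of the torsion subsheaf of $\GGG$, distinguishing torsion supported at a generic smooth point, at a marking gerbe, or at a node gerbe, each of which carries a local twist governed by the relevant root-stack index $r$. Sending a quotient with torsion of a prescribed type to its maximal locally free quotient produces a morphism from the corresponding stratum into a Quot-stack of strictly smaller $s$-invariant; a fibrewise dimension count, using the expected-dimension formula established above, will then show that every such stratum has strictly smaller dimension than the ambient component, provided $s_{e'}$ and $s_{f'}$ are at least $n(\HHH)$ and $n(\BBB)$. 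Combined with the first two steps this will force the locally free locus to be dense in every component and conclude the argument.
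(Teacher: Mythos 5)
The paper's entire proof of this lemma is a citation of Popa--Roth \cite{P-R}, Theorem 6.4, and your plan reconstructs precisely the strategy behind that theorem --- deformation theory of the Quot functor (tangent space $\Hom(\FFF,\GGG)$, obstruction space $\Ext^1(\FFF,\GGG)$, expected dimension $\chi(\FFF^{\dual}\otimes\GGG)$), Serre-duality vanishing of the obstruction for large $s$-invariant, and a stratification by the torsion of the quotient to show the locally free locus is dense --- transplanted to the twisted curve. So you are taking essentially the same route, only making explicit what the paper leaves entirely to the reference; the genuinely new content, which you correctly isolate, is checking that each step survives the stacky structure at the markings and nodes. The one step to treat with care is the generic vanishing of $\Hom_{\CCC}(\GGG,\FFF\otimes\omega_{\CCC})$: a slope inequality alone does not yield this at a general point of an \emph{arbitrary} irreducible component, since neither $\FFF$ nor $\GGG$ is known to be semistable there; in \cite{P-R} this is exactly the substance of the theorem and is obtained by an induction on the degree via elementary modifications at points, so your adaptation would need the analogous induction performed on the twisted curve (with the modifications allowed at gerby points as well).
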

\begin{proof}
This is proved in \cite{P-R}, Theorem $6.4$. 
\end{proof}
Let $(p_1,...,p_n,E_1,E_2)\in C^{n+2}$ and substack $V_e'((p_1,...,p_n),\oplus H_i)\subset Quot^{k,e'}(\HHH)$ be
defined as the degeneracy locus of 
$$\oplus H_i\otimes\OOO_{Quot(\HHH)}\ra pr_1\check{\HHH}\oplus...\oplus pr_n\check{\HHH}\ra\check{F_1}\oplus...\oplus\check{F_n}=\check{\FFF}.$$
We define in the same way in the case of the nodes for a substack $V_f'((E_1,E_2),\oplus H'_i)\subset Quot^{k,f'}(\BBB)$.

Denote $s_e'=(n\sum_{i=1}^{n}k_i-ne')$, $s_f'=2(k_1+k_2)-2f'$, where $e'_n=max(\deg(F)$, with $F$ a subbundle of rank $k_n$ of $\HHH$.
Let $X_1,...,X_k$ be weighted variables such that the weight of $X_i$ is $i$. Let $P(X_1,...,X_{k'_{n}})$ with $k'_n=\sum_{1}^{n}k_i$  be a weighted homogeneous polynomial of weighted degree $s_e'+\sum_{i=1}^{n}k_i(n-\sum_{i=1}^{n}k_i)(1-g)$ (resp. $s_f'+(k_1+k_2)(2-(k_1+k_2))(1-g)$ with $s_e'>s(\HHH)(resp. s_f'>s(\BBB)$).
\begin{definition}
For a pair $(\HHH,e')$ over $C$ and $(\BBB,f')$, we define the Gromov-Witten invariants $N_{n,e'}P(X_1,...,X_{k_{n}})$ as
$$N_{n,e'}P(X_1,...,X_{k'_{n}})=\sum_{k_1,...,k_n}P(c_1(\check{\FFF}),...,c_{k_1+...+k_n}(\check{\FFF}))\cap[Quot_{vir}^{(k'_n,e')}(\HHH)],$$
and $$N_{n',f'}P(X_1,...,X_{k'_{2}})=\sum_{k_1,...,k_2}P(c_1(\check{\FFF}),...,c_{k_1+k_2}(\check{\FFF}))\cap[Quot_{vir}^{(k'_2),f')}(\BBB)].$$
\end{definition}

We specify that the fundamental cycle arising from the Gromov-Witten invariants of the Grassmannian stack as intersection number
in the Quot-stack whose geometric points are in one-to-one correspondences with  the fundamental classes $f_*[\CCC]=\beta _e\in H^2(Grass(\HHH,\beta_e),\QQ)$ of the maps arising from the Kontsevich Deligne Mumford stack $$\KKK_{g,n}(Grass(\HHH,\beta_e))$$ that consists of twisted stable maps $f$ from $\CCC$ to $Grass(\HHH,\beta_e)$. This latter will be proved thereafter.

We note that our $\Quot_{X/S}(r\sqrt{(C,p_1)}\times...r\sqrt{(C,p_n)})$ can be viewed as an augmented simplicial stack.
Moreover, a parabolic bundle over a scheme can be viewed as a functor $E_*:\frac{1}{r}\ZZ\ra\Vect(X)$ whose its 
degree is given by Th\'eor\`eme $4.3$ \cite{Bo-1}
$$par degE_*=deg_{\chi}\nu=q_*(c_1^{et}(\nu).\pi^*c_1^{et}\OOO_X(1)^{n-1},$$
where $q:\chi\ra Spec (\CC),$ and apply the equivalence between tensor categories so that
the Gromov-Witten invariants are well defined in the parabolic case.

We at present want to see the relation between the Gromov-Witten invariants defined above
and the one known in the case of the Kontsevich moduli stack $\KKK_{g,n}(\chi,\beta)$ of stable twisted maps.

\begin{proposition}
Suppose that $E$ is a general stable bundle on the stack $\HHH$, then we can identify the Kontsevich moduli stack 
of the twisted stable maps with the smooth part of the open substack of the Quot-stack.
$$\KKK_{g,n}(Grass(\HHH,\beta_e))\simeq (Quot_0^{k'_n,e'_n})^s(\HHH)\times\CCC^n$$ and
$$\KKK_{g,n}(Grass(\BBB,\beta_f))\simeq (Quot_0^{k'_2,f'_n})^s(\BBB)\times\CCC^n.$$
\end{proposition}
\begin{proof}
We remind that $$\KKK_{g,n}(Grass(\HHH,\beta_e))$$ consists of twisted stable maps $f$ from $\CCC$ to $Grass(\HHH,\beta_e)$
such that the latter is a representable map, the markings are twisted curves and the automorphism group fixing
them is finite, in other words, in terms of their respective coarse moduli spaces, the induced
map is stable. Moreover, $f_*([\CCC])=\beta_e\in H^2((Grass(\HHH,\QQ))$. This stack comes with
evaluations maps $$ev_i:\KKK_{g,n}(Grass(\HHH,\beta_e))\mapsto Grass(\HHH),$$ sending the i-th marking to its image.
When $E$ is general stable bundle on the stack $\HHH$, then one uses the fact that $\KKK_{g,n}(Grass(\HHH,\beta_e))$ (see Example $5.4$ \cite{P-R}) is a pure stack whose every component is generically smooth of expected dimension and general element in every connected
component corresponds to a vector bundle quotient. Hence it follows from Lemma $B.9.2$ of \cite{Fu} that if we choose
general translates of the Schubert cells whose codimensions add up to the expected dimension of the Kontsevich stack
of the twisted stable maps then their intersections after pulling back by the various relevant evaluation maps can be 
realized with the smooth part of the stack $Quot_0^{k,e'_n}(\HHH)\times\CCC^n$, where $Quot_0^{k,e}$ is an open substack of 
$Quot^{k,e}.$
\end{proof}

\section{Case of deformation theory}
We want thereafter to construct the Gromov-Witten invariants for parabolic vector bundles over an orbifold locally
in using deformation theory.

We first construct the versal deformation of $(t,\lambda)$ parabolic connections
to deduce the Kuranishi space of parabolic bundles.

We set
\[
 T_n :=\left.\left\{ (t_1,\ldots,t_n)\in\overbrace{X\times\cdots\times X}^n
 \right| \text{$t_i\neq t_j$ for $i\neq j$} \right\}
\]
for a positive integer $n$.
For integers $d,r$ with $r>0$, we set
\[
 \Lambda^{(n)}_r(d):=
 \left\{ (\lambda^{(i)}_j)^{1\leq i\leq n}_{0\leq j\leq r-1}
 \in \CC^{nr}
 \left|
 d+\sum_{i,j}\lambda^{(i)}_j=0
 \right\}\right..
\]
Take an element $t=(t_1,\ldots,t_n)\in T_n$ and
$\lambda=(\lambda^{(i)}_j)_{1\leq i\leq n,0\leq j\leq r-1}
\in\Lambda^{(n)}_r(d)$.
\begin{definition}\rm $(E,\nabla,\{l^{(i)}_*\}_{1\leq i\leq n})$ is said to be
a $(t,\lambda)$-parabolic connection of rank $r$ if \\
$(1)$ $E$ is a rank $r$ algebraic vector bundle on $X$, and \\
$(2)$ $\nabla: E \ra E\otimes\Omega_C^1 (\log(t_1+\dots+t_n)$ is a connection, and\\
$(3)$ for each $t_i$, $l^{(i)}_*$ is a filtration of $E|_{t_i}=l^{(i)}_0\supset l^{(i)}_1
 \supset\cdots\supset l^{(i)}_{r-1}\supset l^{(i)}_r=0$
 such that $\dim(l^{(i)}_j/l^{(i)}_{j+1})=1$ and
 $(\Res_{t_i}(\nabla)-\lambda^{(i)}_j\mathrm{id}_{E|_{t_i}})
 (l^{(i)}_j)\subset l^{(i)}_{j+1}$
 for $j=0,\ldots,r-1$.
\end{definition}
\begin{remark}\rm
By condition (3) above and \cite {EV-1}, we have
\[
 \deg E=\deg(\det(E))=-\sum_{i=1}^n\Tr\Res_{t_i}(\nabla)
 =-\sum_{i=1}^n\sum_{j=0}^{r-1}\lambda^{(i)}_j=d.
\]
\end{remark}

Let $T$ be a smooth algebraic scheme which is a covering 
of the moduli stack of $n$-pointed smooth projective curves of genus $g$
over $\CC$ and take a universal family $(\CCC,\tilde{t}_1,\ldots,\tilde{t}_n)$
over $T$.

\begin{definition}\rm
We denote the pull-back of $\CCC$ and $\tilde{t}$ with respect to the morphism
$T\times\Lambda^{(n)}_r(d)\rightarrow T$
by the same characters $\CCC$ and
$\tilde{t}=(\tilde{t}_1,\ldots,\tilde{t}_n)$.
Then $D(\tilde{t}):=\tilde{t}_1+\cdots+\tilde{t}_n$
becomes a family of  Cartier divisors on $\CCC$ flat over
$T\times\Lambda^{(n)}_r(d)$.
We also denote by $\tilde{\lambda}$ the pull-back of the
universal family on $\Lambda^{(n)}_r(d)$ by the morphism
$T\times\Lambda^{(n)}_r(d)\rightarrow \Lambda^{(n)}_r(d)$.
We define a functor
$\MMM^{\boldsymbol{\alpha}}_{\CCC/T}
(\tilde{t},r,d)$
from the category of locally noetherian schemes over
$T\times\Lambda^{(n)}_r(d)$ to the category of sets by
\[
 \MMM^{\boldsymbol{\alpha}}_{\CCC/T}
(\tilde{t},r,d)(S):=
 \left\{ (E,\nabla,\{l^{(i)}_j\}) \right\}/\sim,
\] where
\begin{enumerate}
\item $E$ is a vector bundle on $\CCC_S=\CCC\times_{T\times\Lambda^{(n)}_r(d)} S$ of rank $r$,
\item $\nabla:E\rightarrow E\otimes\Omega^1_{\CCC_S/S}(D(\tilde{t})_S)$
 is a relative connection,
\item $E|_{(\tilde{t}_i)_S}=l^{(i)}_0\supset l^{(i)}_1
 \supset\cdots\supset l^{(i)}_{r-1}\supset l^{(i)}_r=0$
 is a filtration by subbundles such that
 $(\Res_{(\tilde{t}_i)_S}(\nabla)-(\tilde{\lambda}^{(i)}_j)_S)(l^{(i)}_j)
 \subset l^{(i)}_{j+1}$
 for $0\leq j\leq r-1$, $i=1,\ldots,n$,
\item for any geometric point $s\in S$,
$\dim (l^{(i)}_j/l^{(i)}_{j+1})\otimes k(s)=1$ for any $i,j$ and
$(E,\nabla,\{l^{(i)}_j\})\otimes k(s)$
is $\alpha$-stable.
\end{enumerate}
Here $(E,\nabla,\{l^{(i)}_j\})\sim
(E',\nabla',\{l'^{(i)}_j\})$ if
there exist a line bundle $\LLL$ on $S$ and
an isomorphism $\sigma:E\stackrel{\sim}\ra E'\otimes\LLL$ 
such that $\sigma|_{t_i}(l^{(i)}_j)=l'^{(i)}_j$ for any $i,j$ and the diagram
\[
 \begin{CD}
  E @>\nabla>> E\otimes\Omega^1_{\CCC/T}(D(\tilde{t})) \\
  @V \sigma VV  @V\sigma\otimes\mathrm{id} VV \\
  E'\otimes\LLL @>\nabla'>>
  E'\otimes\Omega^1_{\CCC/T}(D(\tilde{t}))\otimes\LLL \\
 \end{CD}
\]
commutes.
\end{definition}
We now can construct the moduli space of this functor.
\begin{theorem}\label{moduli-exists}
There exists a relative fine moduli scheme
\[
 M^{\boldsymbol{\alpha}}_{\CCC/T}(\tilde{t},r,d)\rightarrow T\times\Lambda^{(n)}_r(d)
\]
of $\boldsymbol{\alpha}$-stable parabolic connections of rank $r$ and degree $d$,
which is smooth, irreducible and quasi-projective and has an algebraic symplectic structure.
The fiber $M^{\boldsymbol{\alpha}}_{\CCC_x}(\tilde{t}_x,\lambda)$
over $(x,\lambda)\in T\times\Lambda^{(n)}_r(d)$ is the irreducible moduli space of
$\boldsymbol{\alpha}$-stable $(\tilde{t}_x,\lambda)$ parabolic connections
whose dimension is $2r^2(g-1)+nr(r-1)+2$ if it is non-empty.
\end{theorem}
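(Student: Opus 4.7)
The plan is to construct $M^{\boldsymbol{\alpha}}_{\CCC/T}(\tilde t,r,d)$ by a GIT construction in the style of Inaba--Iwasaki--Saito. First I would rigidify: choose $N\gg 0$ such that for any $\boldsymbol{\alpha}$-stable parabolic connection $(E,\nabla,\{l^{(i)}_*\})$, the twist $E(N)$ is globally generated with $H^1=0$ and $h^0(E(N))=P$, where $P$ depends only on $r$ and $d$. This yields a locally closed subscheme $R\subset\Quot_{\CCC/T\times\Lambda^{(n)}_r(d)}(\OOO^{\oplus P}(-N),P)$ carrying a universal quotient. Pulling back to the relative flag bundle of full filtrations over the markings $\tilde t_i$, and then cutting out the closed subscheme defined by the Leibniz rule for $\nabla$ together with the residue conditions
\[
(\Res_{\tilde t_i}(\nabla)-\tilde\lambda^{(i)}_j\cdot\id)(l^{(i)}_j)\subset l^{(i)}_{j+1},
\]
produces a scheme $\tilde R$ carrying a natural $\PGL_P$-action. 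The GIT quotient on the $\boldsymbol{\alpha}$-stable locus, linearized by a Grothendieck-type very ample line bundle on the Quot, delivers the desired relative fine moduli scheme, quasi-projective over $T\times\Lambda^{(n)}_r(d)$.

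Next I would run the infinitesimal theory. Deformations of $(E,\nabla,\{l^{(i)}_*\})$ are governed by the hypercohomology of the two-term complex
\[
F^\bullet\colon\;\END^{\mathrm{par}}(E)\xrightarrow{\,[\nabla,\,\cdot\,]\,}\END^{\mathrm{par}}_{\mathrm{s}}(E)\otimes\Omega^1_{\CCC}(D(\tilde t)),
\]
where $\END^{\mathrm{par}}(E)$ denotes endomorphisms preserving each $l^{(i)}_*$ and the strictly parabolic subsheaf $\END^{\mathrm{par}}_{\mathrm{s}}$ is used on the right to accommodate the residue compatibility. Tangent vectors form $\HH^1(F^\bullet)$ and obstructions lie in $\HH^2(F^\bullet)$. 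Serre duality on $\CCC$ identifies $\HH^2(F^\bullet)^{\dual}\simeq\HH^0(F^\bullet)$, and $\boldsymbol{\alpha}$-stability forces both groups to reduce to scalar endomorphisms whose traces are fixed by the constraint $d+\sum_{i,j}\lambda^{(i)}_j=0$; hence obstructions vanish, so $M^{\boldsymbol{\alpha}}_{\CCC/T}(\tilde t,r,d)$ is smooth and its structural map to $T\times\Lambda^{(n)}_r(d)$ is smooth. A Riemann--Roch computation on $F^\bullet$, accounting for the filtration shifts contributing $n r(r-1)/2$ on each side, produces
\[
\dim\HH^1(F^\bullet)=2r^2(g-1)+nr(r-1)+2,
\]
which is the asserted fiber dimension.

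The algebraic symplectic structure is obtained from the Serre-duality pairing
\[
\HH^1(F^\bullet)\times\HH^1(F^\bullet)\lra\HH^2\bigl(\END^{\mathrm{par}}(E)\otimes\Omega^1(D)\bigr)\simeq\CC,
\]
given by the trace of the cup product; non-degeneracy is exactly the trace-free vanishing used above, and closedness is a local calculation in \v{C}ech representatives in the spirit of Goldman, extended to the parabolic case by Biswas. Descent of the fiberwise form to a global $2$-form on $M^{\boldsymbol{\alpha}}_{\CCC/T}(\tilde t,r,d)$ follows from the fact that the hypercohomology sheaves glue to a locally free sheaf identified with the relative cotangent bundle.

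The step I expect to be the main obstacle is irreducibility of the total moduli space. Over a single fiber $(x,\lambda)\in T\times\Lambda^{(n)}_r(d)$ with sufficiently generic $\lambda$, irreducibility is Inaba's theorem on moduli of parabolic connections. To globalize I would exploit the smoothness (hence equidimensionality) of the projection to $T\times\Lambda^{(n)}_r(d)$ proved above, together with the connectedness of the parameter space, to propagate irreducibility from the generic fiber; as a convenient reference point one can degenerate $\lambda\to 0$, where the connection moduli retracts onto the moduli of underlying parabolic bundles, whose irreducibility is classical. Combining these with the properness of the flag fibration over $R$ and the surjectivity of the quotient map yields irreducibility of $M^{\boldsymbol{\alpha}}_{\CCC/T}(\tilde t,r,d)$.
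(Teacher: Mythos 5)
The paper's own ``proof'' of this theorem is simply a citation of Inaba \cite{I}, and your outline (rigidification via a Quot scheme with flags, cutting out the connection and residue conditions, GIT quotient on the stable locus, deformation theory via the two-term parabolic endomorphism complex, Riemann--Roch for the dimension, trace-pairing symplectic form) is exactly the route taken in that reference, so for those parts you are on the standard track. One inaccuracy in the smoothness step: by your own duality $\HH^2(F^\bullet)^{\dual}\simeq\HH^0(F^\bullet)$, and $\alpha$-stability makes $\HH^0(F^\bullet)$ the scalars, so $\HH^2(F^\bullet)\simeq\CC$ is \emph{not} zero; ``obstructions vanish'' does not follow directly, and the constraint $d+\sum_{i,j}\lambda^{(i)}_j=0$ plays no role here. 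The correct argument splits $F^\bullet$ into its trace and trace-free parts: the trace-free subcomplex has vanishing $\HH^2$ by simplicity plus duality, while the trace part is $\OOO_{\CCC}\rightarrow\Omega^1_{\CCC}(D(\tilde t))$, whose $\HH^2$ vanishes since $H^1(\Omega^1_{\CCC}(D(\tilde t)))=0$ for $n\geq 1$; equivalently one shows the obstruction class dies because the determinant connection deforms unobstructedly.

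The genuine gap is the irreducibility step, which you yourself flag. Letting the exponents $\lambda^{(i)}_j$ tend to $0$ does not make the moduli of connections ``retract onto the moduli of underlying parabolic bundles'': at $\lambda=0$ one still has connections (now with nilpotent residues), the forgetful map to parabolic bundles is only reasonable on the locus where the underlying parabolic bundle is semistable, and no retraction exists; you appear to be conflating the residue eigenvalues with the $\lambda$ of Deligne-type $\lambda$-connections. Moreover, since the total space is smooth, irreducibility is equivalent to connectedness, and smoothness of $M^{\boldsymbol{\alpha}}_{\CCC/T}(\tilde t,r,d)\rightarrow T\times\Lambda^{(n)}_r(d)$ over a connected base does not propagate irreducibility from a generic fiber: one needs connectedness of \emph{every} fiber, which is precisely (part of) the assertion being proved, so invoking ``Inaba's theorem'' fiberwise makes the argument circular in this context. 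A real proof needs a dedicated argument, e.g.\ along the lines of the cited reference: analyze the open locus of connections whose underlying parabolic bundle is itself stable (an affine-space torsor over the irreducible Mehta--Seshadri moduli space) and show that every $\boldsymbol{\alpha}$-stable connection can be deformed into it, controlling the codimension of the complement. As it stands, this part of your proposal would not go through.
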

\begin{proof}
See \cite{I}.
\end{proof}

Let $(\tilde{E},\tilde{\nabla},\{\tilde{l}^{(i)}_j\})$
be a universal family on
$\CCC\times_T M^{\boldsymbol{\alpha}}_{\CCC/T}(\tilde{t},r,d)$.
We define a complex $\GGG^{\fatdot}$ by
\begin{align*}
 \GGG^0&:=\left\{ s\in {\mathcal End}(\tilde{E}) \left|
 \text{$s|_{\tilde{t}_i\times M^{\boldsymbol{\alpha}}_{\CCC/T}(\tilde{t},r,d)}
 (\tilde{l}^{(i)}_j)\subset\tilde{l}^{(i)}_j$ for any $i,j$}
 \right\}\right. \\
 \GGG^1&:=\left\{ s\in {\mathcal End}(\tilde{E})
 \otimes\Omega^1_{\CCC/T}(D(\tilde{t})) \left|
 \text{$\Res_{\tilde{t}_i\times M^{\boldsymbol{\alpha}}_{\CCC/T}(\tilde{t},r,d)}(s)
 (\tilde{l}^{(i)}_j)\subset \tilde{l}^{(i)}_{j+1}$ for any $i,j$}
 \right\}\right. \\
 \nabla_{\GGG^{\fatdot}} &: \GGG^0 \lra \GGG^1 ; \quad
 \nabla_{\GGG^{\fatdot}}(s)=\tilde{\nabla}\circ s - s\circ\tilde{\nabla}.
\end{align*}

As in the previous section, we can construct the Kuranishi space of $(t,\lambda)$-parabolic connections on a smooth projective curve
in using the hypercohomology of $\GGG^{\fatdot}$.
\begin{theorem}
Let $X$ be a smooth projective curve over $k$, $(\EEE,\nabla,\{l^{(i)}_{*}\})$ a $(t,\lambda)$-parabolic connection on $X$, $\GGG^{\fatdot}$ the complex of sheaves on $X$ defined above, $W=\HH^{1}(X,\GGG^{\fatdot})$, $(\delta_1\dots,\delta_N)$ a basis of $W$ and $(t_1,\dots,t_N)$ the dual coordinates on $W$.
Let $W_k$ denote the $k$-th infinitesimal neighborhood of $0$ in $W$, and $(\EEE_1,\nabla_1,\{l^{(i)}_{*}\}_{1})$ the universal first order
deformation of $(\EEE,\nabla,\{l^{(i)}_{*}\})$ over $X\times W_1$ in the class of $(t,\lambda)$-parabolic connections. Then there exists a formal power series
$$f(t_1,\dots,t_N)=\sum_{k=2}^{\infty} f_{k}(t_1\dots,t_N)\in \HH^{2}(X, \GGG^{\fatdot})[[t_1,\dots,t_N]],$$
where $f_k$ is  homogeneous of degree $k$ ($k\geq 2$), with the following property.
Let $I$ be the ideal of $k[[t_1,\dots,t_N]]$ generated by the image of the map $f^*:\HH^{2}(X,\GGG^{\fatdot})\rar k[[t_1,\dots,t_N]]$, adjoint to $f$.
Then for any $k\geq 2$, the triple $(\EEE_1,\nabla_1,\{l^{(i)}_{*}\}_{1})$ extends to a $(t,\lambda)$-parabolic connection $(\EEE_k,\nabla_k,\{l^{(i)}_{*}\}_{k})$ on $X\times V_k$, where
$V_k$ is the closed subscheme of $W_k$ defined by the ideal $I\otimes k[[t_1,\dots,t_N]]/(t_1,\dots,t_N)^{k+1}.$
\end{theorem}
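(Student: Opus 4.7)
The plan is a standard obstruction-theoretic induction on $k$, exploiting the fact that $\GGG^{\fatdot}$ is the tangent-obstruction complex for $(t,\lambda)$-parabolic connections on $X$: first-order deformations are classified by $\HH^{1}(X,\GGG^{\fatdot})=W$, and obstructions to extending a $k$-th order deformation to order $k+1$ land in $\HH^{2}(X,\GGG^{\fatdot})$. The base case is given by the universal first-order deformation $(\EEE_{1},\nabla_{1},\{l^{(i)}_{*}\}_{1})$ on $X\times W_{1}$, which identifies the coordinates $(t_{1},\dots,t_{N})$ with the dual basis of $(\delta_{1},\dots,\delta_{N})$.

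For the inductive step I assume $(\EEE_{k-1},\nabla_{k-1},\{l^{(i)}_{*}\}_{k-1})$ has already been constructed on $X\times V_{k-1}$ and try to extend it one order further to $X\times W_{k}$. The plan is to pick an affine \v{C}ech cover $\{U_{\alpha}\}$ of $X$ subordinate to the marked points $\tilde{t}_{i}$, so that the parabolic filtrations trivialise on every $U_{\alpha}$, and to lift the bundle, the connection and the filtrations locally on each $U_{\alpha}\times W_{k}$; such local lifts exist because each $U_{\alpha}$ is affine. On overlaps $U_{\alpha\beta}\times W_{k}$ the discrepancy between two lifts is a \v{C}ech $1$-cochain with values in the two-term complex $\GGG^{0}\to\GGG^{1}$ tensored with $\mathfrak{m}^{k}/\mathfrak{m}^{k+1}$, where $\mathfrak{m}=(t_{1},\dots,t_{N})$. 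Its \v{C}ech differential is a $2$-cocycle whose class
\[
 f_{k}\in\HH^{2}(X,\GGG^{\fatdot})\otimes\mathfrak{m}^{k}/\mathfrak{m}^{k+1}
\]
is the $k$-th homogeneous piece of the formal series $f=\sum_{k\geq 2}f_{k}$. By construction, $f_{k}$ vanishes modulo the ideal $I$ generated by the adjoint $f^{*}$, so after this ideal is imposed the local lifts can be adjusted by a $\GGG^{\fatdot}$-valued cochain and patched into a global extension $(\EEE_{k},\nabla_{k},\{l^{(i)}_{*}\}_{k})$ defined precisely over $X\times V_{k}\subset X\times W_{k}$.

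The main technical obstacle will be coherence of all these choices across orders. Each local lift is determined only up to a $\GGG^{0}$-valued $0$-cochain and each class $f_{k}$ only modulo coboundaries, so a priori both the chain of extensions and the power series $f$ depend on choices. The remedy, which is the heart of the Kuranishi construction, is to fix a splitting of the obstruction sequence at every order using the identification $W=\HH^{1}(X,\GGG^{\fatdot})$: one normalises each lift so that the extension on $V_{k}$ restricts to the prescribed $(\EEE_{k-1},\nabla_{k-1},\{l^{(i)}_{*}\}_{k-1})$ on $V_{k-1}$, which then pins down $f_{k}$ canonically modulo $I$. Preservation of the residue conditions $(\Res_{(\tilde{t}_{i})_{S}}(\nabla_{k})-\tilde{\lambda}^{(i)}_{j})(l^{(i)}_{j})\subset l^{(i)}_{j+1}$ at every order is automatic, since these conditions are built into the definitions of $\GGG^{0}$ and $\GGG^{1}$, so any $\GGG^{\fatdot}$-cocycle used to correct a lift respects them; and flatness of $\EEE_{k}$ over $V_{k}$, together with the fact that the $l^{(i)}_{j}$ extend as subbundles and not merely as subsheaves, will follow by Nakayama from the local freeness of $\EEE_{1}$ over $W_{1}$.
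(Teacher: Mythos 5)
Your proposal is correct and takes essentially the same route as the paper: the paper's entire proof is a citation to the Kuranishi construction of Theorem 3.6 of \cite{Machu}, and your order-by-order obstruction-theoretic induction --- local lifts over an affine cover, obstruction classes in $\HH^{2}(X,\GGG^{\fatdot})$ assembling into the Kuranishi map $f$, and $V_k$ cut out by the ideal $I$ --- is precisely that construction transported to the setting where $\GGG^{\fatdot}$ governs deformations of $(t,\lambda)$-parabolic connections. The one loose phrase is calling the obstruction ``the \v{C}ech differential'' of the discrepancy $1$-cochain (that would be cohomologically trivial); it is really the failure of the cocycle condition for the chosen gluings, quadratic in the lower-order data, but this does not affect the substance of the argument.
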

\begin{proof}
This follows of the proof by construction in Theorem $3.6$ of \cite{Machu}.
\end{proof}
We now want to construct the Kuranishi space of $T$-parabolic bundles.
Let $T$ be a finite set of smooth points $\{P_1,\dots,P_n\}$ of $X$ and $W$ a vector bundle on $X$.
\begin{definition}
By a quasi-parabolic structure on a vector bundle $W$ at a smooth point $P$ of $X$, we mean a choice of a flag
$$W_P=F_1(W)_P\supset F_2(W)_P\supset...\supset F_l(W)_P=0,$$ in the fibre $W_P$ of $W$ at $P$.
A parabolic structure at $P$ is a pair consisting of a flag as above and a sequence $0\leq\alpha_1<\alpha_2<...<\alpha_l<1$ of weights of $W$ at $P$.
\end{definition}
The integers $k_1=\dim F_1(W)_P-\dim F_2(W)_P$,\ldots, $k_l=\dim(F_l(W)_P)$ are called the multiplicities of $\alpha_1,\dots,\alpha_l$.
A $T$-parabolic structure on $W$ is the triple consisting of a flag at $P$, some weights $\alpha_i$, and their multiplicities $k_i$.
A vector bundle $W$ endowed with a $T$-parabolic structure is called a $T$-parabolic bundle. 
\begin{definition}
A $T$-parabolic bundle $W_1$ on $X$ is a $T$-parabolic subbundle of a $T$-parabolic bundle $W_2$ on $X$, if
$W_1$ is a subbundle of $W_2$ and at each smooth point $P$ of $T$, the weights of $W_1$ are a subset of those of $W_2$.
Further, if we take the weight $\alpha_{j_0}$ such that $1\leq j_0\leq m$, and  the weight $\beta_{k_0}$ for the greatest integer $k_0$ such that $F_{j_0}(W_1)_P\subset F_{k_0}(W_2)_P$, then $\alpha_{j_0}=\beta_{k_0}$.
\end{definition}
\begin{definition}
The parabolic degree of a $T$-parabolic vector bundle $W$ on $X$ is 
$$\mathrm{par\deg}(W):=\deg(W)+\sum_{P\in I}\sum_{i=1} ^{r}k_i (P)\alpha_i (P).$$
\end{definition}
\begin{definition}
A $T$-parabolic bundle $W$
is stable (resp. semistable) if
for any proper nonzero $T$-parabolic subbundle $W'\subset W$
the inequality
\begin{gather*} {\mathrm{par\deg} W'}<{(\textrm{resp. $\leq$})}
 \frac{\mathrm{par\deg W} \rk(W')}{\rk W}\end{gather*}
holds.
\end{definition}

We have a forgetful map $g$ from $(t,\lambda)$ parabolic connections to $T$-parabolic bundles.
We thus can construct the Kuranishi space of $T$-parabolic bundles by following an analogous argument
to the one given above. We first introduce the Higgs field $\Phi:\EEE\rar\EEE\otimes\Omega_X^1(D)$ defined as follows:
$$\forall p\in X, \forall f\in\OOO_{X,p}, \forall s\in\EEE_P, \Phi(fs)=f\Phi(s).$$
We afterwards consider a parabolic bundle $\EEE$ with fixed weights and parabolic points $P_1,\dots,P_N$. We set $L=K\otimes\OOO(P_1,\dots,P_N)$, the line bundle associated to the canonical divisor together with the divisor of poles $D=P_1+\dots+P_N$.
The sheaf of rational $1$-forms on $X$ is identified with the sheaf of rational sections of the canonical bundle having single poles at points
$P_1,\dots,P_N$. We replace $t_i$ by $P_i$, for $i=1,\dots,N$ and $M^{\boldsymbol{\alpha}}_{\CCC/T}(\tilde{t},r,d)$ by $M^s_{T}$. 
We define a complex $\BBB^{\fatdot}$ by
\begin{align*}
 \BBB^0&:=\left\{ s\in {\mathcal End}(\tilde{E}) \left|
 \text{$s|_{\tilde{P}_i\times M^{s}_{Z,\CCC/T}(\tilde{P},r,d)}
 (\tilde{l}^{(i)}_j)\subset\tilde{l}^{(i)}_j$ for any $i,j$}
 \right\}\right. \\
 \BBB^1&:=\left\{ s\in {\mathcal End}(\tilde{E})
 \otimes\Omega^1_{\CCC/T}(D(\tilde{Pi})) \left|
 \text{$\Res_{\tilde{P}_i\times M^{s}_{Z,\CCC/T}(\tilde{P},r,d)}(s)
 (\tilde{l}^{(i)}_j)\subset \tilde{l}^{(i)}_{j+1}$ for any $i,j$}
 \right\}\right.\\
 \ad\Phi_{\BBB^{\fatdot}} &: \BBB^0 \lra \BBB^1 ; \quad
 \ad\Phi_{\BBB^{\fatdot}}(s)=\tilde{\Phi}\circ s - s\circ\tilde{\Phi}.
\end{align*}
From this, we deduce the construction of the Kuranishi space of $T$-parabolic bundles on a smooth projective curve.
\begin{theorem}
Let $X$ be a smooth projective curve over $k$ or a complex space (in which case $k=\CC$), $\EEE$  a $T$-parabolic bundle on $X$, $\BBB^{\fatdot}$ the complex of sheaves on $X$ defined as above, $W=\HH^{1}(X,\BBB^{\fatdot})$, $(\delta_1\dots,\delta_N)$ a basis of $W$ and $(t_1,\dots,t_N)$ the dual coordinates on $W$.
Let $W_k$ denote the $k$-th infinitesimal neighborhood of $0$ in $W$, and $\EEE_1$ the universal first order
deformation of $\EEE$ over $X\times W_1$. Then there exists a formal power series
$$f(t_1,\dots,t_N)=\sum_{k=2}^{\infty} f_{k}(t_1\dots,t_N)\in \HH^{2}(X, \BBB^{\fatdot})[[t_1,\dots,t_N]],$$
where $f_k$ is  homogeneous of degree $k$ ($k\geq 2$), with the following property.
Let $I$ be the ideal of $k[[t_1,\dots,t_N]]$ generated by the image of the map $f^*:\HH^{2}(X,\BBB^{\fatdot})^{*}\rar k[[t_1,\dots,t_N]]$, adjoint to $f$.
Then for any $k\geq 2$, $\EEE_1$ extends to a $T$-parabolic bundle $\EEE_k$ on $X\times V_k$, where
$V_k$ is the closed subscheme of $W_k$ defined by the ideal $I\otimes k[[t_1,\dots,t_N]]/(t_1,\dots,t_N)^{k+1}.$
\end{theorem}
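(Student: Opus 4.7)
The plan is to mimic the inductive construction used for $(t,\lambda)$-parabolic connections in Theorem 3.6 of \cite{Machu}, replacing the connection differential $\nabla_{\GGG^{\fatdot}}$ by the Higgs commutator differential $\ad\Phi_{\BBB^{\fatdot}}$, and verifying that the parabolic flags and weights are preserved at each order. The backbone of the argument is the standard dictionary: $\HH^1(X,\BBB^{\fatdot})$ is the tangent space to deformations of $\EEE$ as a $T$-parabolic bundle (compatible with the Higgs field $\Phi$), while $\HH^2(X,\BBB^{\fatdot})$ is the natural home of the obstructions to lifting an infinitesimal deformation to the next order. A straightforward \v{C}ech computation then identifies $\EEE_1$ with the universal first-order deformation over $X\times W_1$.

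Granting this setup, I would construct $\EEE_k$ recursively. Assume $\EEE_{k-1}$ has been built on $X\times V_{k-1}$, where $V_{k-1}\subset W_{k-1}$ is cut out by the ideal generated by $f_2,\dots,f_{k-1}$. On a sufficiently fine affine cover of $X$, lift $\EEE_{k-1}$ locally to order $k$; the difference of two local lifts is a \v{C}ech $0$-cochain with values in $\BBB^0$, and the failure of the local lifts to patch globally is a $2$-cocycle whose hypercohomology class, tensored with the degree-$k$ part of $\mathfrak{m}_W^{k}/\mathfrak{m}_W^{k+1}$, defines the homogeneous component $f_k(t_1,\dots,t_N)\in\HH^2(X,\BBB^{\fatdot})[[t_1,\dots,t_N]]$. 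By construction this class vanishes exactly on the closed subscheme $V_k\subset W_k$ defined by the ideal $I\otimes k[[t_1,\dots,t_N]]/(t_1,\dots,t_N)^{k+1}$, over which the desired lift $\EEE_k$ therefore exists.

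The main obstacle will be to confirm, at each inductive step, that the obstruction cocycle actually lands in the subcomplex $\BBB^{\fatdot}$ and not merely in the larger complex ${\mathcal E}nd(\EEE)\to {\mathcal E}nd(\EEE)\otimes\Omega^1_{\CCC/T}(D(\tilde{P}))$. In other words, one must check that the flag-preservation condition on $\BBB^0$ and the residue-shifting condition on $\BBB^1$ propagate under the inductive lifting procedure, so that each $f_k$ is well-defined in the correct hypercohomology group. This is where the interplay with $\Phi$ enters: because $\Phi$ by definition of a $T$-parabolic bundle already respects the flag at each $P_i$, the differential $\ad\Phi$ carries $\BBB^0$ into $\BBB^1$, and the local endomorphisms parametrizing the ambiguity in the lifts preserve the flag $F_1(\EEE)_{P_i}\supset\cdots\supset F_l(\EEE)_{P_i}=0$ at every stage.

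Once this parabolic-compatibility verification is in place, the formal series $f=\sum_{k\geq 2}f_k$ is well-defined, and the inductive assembly of the $\EEE_k$ yields the universal deformation over $X\times V_k$ for every $k\geq 2$, exactly as in \cite{Machu}. In particular, the proof is identical in spirit to the earlier theorem on $(t,\lambda)$-parabolic connections, and reduces to transporting that argument verbatim along the forgetful map $g$ from parabolic connections to $T$-parabolic bundles, with $\BBB^{\fatdot}$ playing the role previously played by $\GGG^{\fatdot}$.
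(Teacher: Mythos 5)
Your proposal is correct and follows essentially the same route as the paper: the paper's own proof is simply a citation to the construction in Theorem 3.6 of \cite{Machu}, and your argument is exactly that construction transported along the complex $\BBB^{\fatdot}$ (with $\ad\Phi_{\BBB^{\fatdot}}$ in place of $\nabla_{\GGG^{\fatdot}}$), together with the flag-preservation check that makes the transport legitimate. The extra detail you supply on the inductive obstruction classes in $\HH^2(X,\BBB^{\fatdot})$ is precisely what the cited construction carries out, so nothing is missing.
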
 
\begin{proof}
This follows of the proof by construction in Theorem $3.6$ of \cite{Machu}.
\end{proof}

\begin{definition}
The inverse limit $\mathbb{V}=\underleftarrow{\lim}V_k$ is called the formal Kuranishi space of $\EEE$, and $\boldsymbol{\EEE}=\underleftarrow{\lim} \EEE_k$ the formal universal parabolic bundle over $\mathbb{V}$.
\end{definition}

We can hence apply the previous method of constructing locally the Gromov-Witten invariants of parabolic bundles over an orbifold,
where we replace $\HHH=r\sqrt{(C,p_1)}\times...r\sqrt{(C,p_n)})$
by $\HHH=r\sqrt{(\mathbb{V},p_1)}\times...r\sqrt{(\mathbb{V},p_n)})$, idem
for $\BBB=r\sqrt{(C,E_1)}\times r\sqrt{(C,E_2)}$.
\begin{definition}
For a pair $(\HHH,e')$ over $C$ and $(\BBB,f')$, we define the Gromov-Witten invariants $N_{n,e'}P(X_1,...,X_{k_{n}})$ as
$$N_{n,e'}P(X_1,...,X_{k'_{n}})=\sum_{k_1,...,k_n}P(c_1(\check{\FFF}),...,c_{k_1+...+k_n}(\check{\FFF}))\cap[Quot_{vir}^{(k'_n,e')}(\HHH)],$$
with $k'_n=\sum_{1}^{n}k_i$ 
and $$N_{n',f'}P(X_1,...,X_{k'_{2}})=\sum_{k_1,...,k_2}P(c_1(\check{\FFF}),...,c_{k_1+k_2}(\check{\FFF}))\cap[Quot_{vir}^{(k'_2),f')}(\BBB)].$$
\end{definition}

\section{Generalization}
It is then natural to ask what happens in the framework of the generalization of the primitive definition
of a parabolic structure at a marked point.

We at present consider a connected complex reductive algebraic group $G$ containing a simply-connected and simple compact group $K$ such
$T$ its maximal torus in $K$ and $P$ a parabolic subgroup of $G$. 
We denote $W$ the Weyl group and $W_{P}$ its subgroup generated by the simple reflection of roots of the Levi subgroup of P.
We also denote $\boldsymbol{t}$ the Cartan subalgebra containing $\boldsymbol{t_+}$ the positive Weyl chamber and $\alpha_{0}$ the highest
root. Let $\pi:E\ra C$ be a principal $G$-bundle over $C$ with marked points $p_1,...,p_n$.
\begin{definition}
A parabolic structure at $p_i$ consists of the following data:\\
$(1)$ a standard parabolic subgroup $P_i\subset G$.\\
$(2)$ $\phi_i\in E_x/P_i$ of the reduction of the fiber $E_x$ to $P_i$.\\
$(3)$ a marking $\mu_i\in\UUU$, where $\UUU=\{\epsilon\in\boldsymbol{t_+}\mid \alpha_{0}(\epsilon)\leq 1\}$ with $\alpha_0(\mu_i)<1$, where 
the stabilizer $G_{\mu_{i}}$ under the adjoint action is a Levi subgroup of $P_i$.
\end{definition}
Hence a parabolic bundle on $(C,p_1,...,p_n)$ is a bundle $E$ with parabolic structure at these points.
\begin{definition}
A reduction of structure group of $E$ at $P$ is a map
$$\sigma:C\ra E/P.$$
\end{definition}
Note that for any $\lambda\in\Lambda_P$, where $\Lambda_P$ is the character subgroup of P, $\sigma^{*}(E(\lambda))$ is 
a line bundle on $C$ whose degree is in $\ZZ$. The latter will be used to define the root stacks as previously.

One of the goal of this paper is to determine the number of maximal parabolic subbundles of a sufficiently general stable bundle.
So, we make reference to the definition of Ramanathan for the semistability.
\begin{definition}
E semistable if $\deg(\sigma^{*}(E(\lambda))\leq 0, \forall\lambda\in\Lambda_{P,+}.$
\end{definition}
Hence, we see that the definition of semistability for parabolic principal
$G$-bundles depend on the relative position of $\sigma$ and $\phi_î$.
Given two parabolic subgroups $P'_1=\Ad(g)P_1$, $P'_2=\Ad(g)P_2$, define
their relative position $(P'_1,P'_2)\in WP_1\setminus{\ W/WP_2}$ to be the image
of $(g_1,g_2)$ under the map $$G\times G\ra WP_1\setminus{\ W/WP_2}.$$
We deduce the following definition.
\begin{definition}
$E$ is stable (resp. semistable) if for any maximal subgroup $P$ of $G$ and $\sigma$, we have
$$\deg(\sigma^{*}(E(\lambda))+\sum_{i=1}^{p}\omega_P(w_i\mu_i)< 0 (resp.\leq 0),$$ where $w_i=(\phi_i,\sigma(p_i))$ and
$\omega_p$ fundamental weights of $P$.\\ 
A sufficiently general stable bundle is a bundle whose associated parameter space
of the stable bundles is a dense open subset of a Zariski open subset of the parameter space of bundles.
\end{definition}

We apply our previous results to this case and get the definition of the Gromov-Witten invariants in the
case of parabolic bundles where we replace $\HHH=r\sqrt{(C,p_1)}\times...r\sqrt{(C,p_n)})$
by $\HHH=r\sqrt{(\sigma^{*}(E(\lambda)),p_1)}\times...r\sqrt{(\sigma^{*}(E(\lambda)),p_n)})$, idem
for $\BBB=r\sqrt{(C,E_1)}\times r\sqrt{(C,E_2)}$.
Finally, we arrive at the following definition:
\begin{definition}
For a pair $(\HHH,e')$ over $C$ and $(\BBB,f')$, we define the Gromov-Witten invariants $N_{n,e'}P(X_1,...,X_{k_{n}})$ as
$$N_{n,e'}P(X_1,...,X_{k'_{n}})=\sum_{k_1,...,k_n}P(c_1(\check{\FFF}),...,c_{k_1+...+k_n}(\check{\FFF}))\cap[Quot_{vir}^{(k'_n,e')}(\HHH)],$$
with $k'_n=\sum_{1}^{n}k_i$ 
and $$N_{n',f'}P(X_1,...,X_{k'_{2}})=\sum_{k_1,...,k_2}P(c_1(\check{\FFF}),...,c_{k_1+k_2}(\check{\FFF}))\cap[Quot_{vir}^{(k'_2),f')}(\BBB)].$$
\end{definition}

We show that the Gromov-Witten invariants defined hence are independent on the choice of the orbifold curve $C$ of genus $g$.
For this, we first note that for a collection of root stacks over a genus-$g$ curve, the existence of 
a smooth irreducible variety $T$ and a family $\FFF$ of root stacks on $C\times T$ whose restriction at the fiber over a point in $T$ gives a root stack, constructed in using the universal property arising from the smoothness and the irreducibility of the moduli stack of roots.
Therefore, we obtain for a such family on $C\times T$.

\begin{proposition}
If $\FFF$ is a family of root stacks on $C\times T$, with $T$ a smooth curve, and $e$ being chosen
such that $s_e\geq s_{\FFF}$, then the Gromov-Witten invariants are independent of the choice of points $x\in B$.
\end{proposition}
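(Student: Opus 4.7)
The plan is to assemble the fiberwise Quot-stacks into a single relative Quot-stack over $T$ and to recognize the Gromov-Witten invariants as the values of a locally constant function on the smooth connected curve $T$. Concretely, from the family $\FFF$ of root stacks over $C\times T$, I form the relative Quot-stack $\mathbf{Q}:=\Quot^{k,e}_{\FFF/(C\times T)/T}$ parametrizing $T$-flat coherent quotients of $\FFF$ of rank $k$ and degree $e$ along the fibers of the projection $C\times T\ra T$. Theorem 1.3 applied relatively shows that $\mathbf{Q}\ra T$ is representable and projective, with a universal quotient $\mathbf{F}$ whose restriction to the fiber $\mathbf{Q}_x$ coincides, for each $x\in T$, with the universal quotient used in Section 2 for $\FFF_x$. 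Combined with the hypothesis $s_e\geq s_\FFF$ and Lemma 2.1 (after \cite{P-R}), this guarantees that every component of each $\mathbf{Q}_x$ is generically smooth of the expected dimension, and it also furnishes a perfect relative obstruction theory on $\mathbf{Q}\ra T$ whose restriction to each fiber recovers the obstruction theory defining $[\Quot^{k,e}_{\mathrm{vir}}(\FFF_x)]$.

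The argument then has two further steps. First, using a virtual pullback formalism \`a la Kresch on Deligne-Mumford stacks, one obtains $i_x^{!}[\mathbf{Q}_{\mathrm{vir}}/T]=[\Quot^{k,e}_{\mathrm{vir}}(\FFF_x)]$ for each $x\in T$, where $i_x:\mathbf{Q}_x\hookrightarrow\mathbf{Q}$ denotes the fiber inclusion. Second, the Chern classes $c_j(\mathbf{F}^{\wedge})$ in the operational Chow ring of $\mathbf{Q}$ restrict fiberwise to the classes used in the definition of $N_{n,e'}P$. Consequently the cycle class $P\bigl(c_\ast(\mathbf{F}^{\wedge})\bigr)\cap[\mathbf{Q}_{\mathrm{vir}}/T]$ pushes forward to a class on $T$ whose value at any $x\in T$ equals the Gromov-Witten invariant computed for $\FFF_x$. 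Because $T$ is a smooth connected curve, this pushforward is locally constant and hence constant, which yields the asserted independence.

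The main obstacle I anticipate is precisely the compatibility $i_x^{!}[\mathbf{Q}_{\mathrm{vir}}/T]=[\Quot^{k,e}_{\mathrm{vir}}(\FFF_x)]$, which for a general perfect obstruction theory requires careful control of the relative cotangent complex and its restriction to fibers. I would sidestep this by leveraging the hypothesis $s_e\geq s_\FFF$ a second time: on the dense open substack where $\mathbf{Q}_x$ is smooth of the expected dimension, the virtual class agrees with the ordinary fundamental class, so the invariance assertion reduces to the elementary fact that intersection numbers are constant in flat projective families over a smooth connected base. Any residual pathologies coming from the stacky structure at nodes and markings, together with the rational degrees inherent in the parabolic setting, are handled uniformly within Kresch's Chow theory for Deligne-Mumford stacks, which is designed to accommodate both the virtual pullback and the fractional classes that appear.
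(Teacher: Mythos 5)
Your overall strategy --- assemble the fiberwise Quot-stacks into a relative Quot-stack $\mathbf{Q}\ra T$ and deduce constancy of the intersection numbers over the smooth connected base --- is the same as the paper's, but the step you dismiss as ``elementary'' at the end is precisely the nontrivial content of the paper's own proof, and you have not supplied it. The paper argues that the relative morphism $\Quot(\FFF)\ra T$ is a locally complete intersection morphism, hence flat, using the hypothesis $s_e\geq s_{\FFF}$ (through the statement that every component of every fiber $\Quot(\FFF_x)$ is generically smooth of expected dimension, as in Lemma 2.1), and then concludes by Lemma 1.6 of \cite{Ber}. Flatness of $\mathbf{Q}$ over $T$ is not automatic: a relative Quot-stack can a priori acquire irreducible components, or excess-dimensional loci, lying entirely over special points of $T$, and it is exactly the combination of the expected-dimension statement for \emph{every} fiber with the local description of the Quot-stack as cut out by the expected number of equations (the lci property) that rules this out. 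In your closing reduction you invoke ``intersection numbers are constant in flat projective families'' while assuming the flatness you would need to prove, so that branch of your argument has a genuine gap.

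Your primary branch --- a relative perfect obstruction theory on $\mathbf{Q}\ra T$, the compatibility $i_x^{!}[\mathbf{Q}_{\mathrm{vir}}/T]=[\Quot_{\mathrm{vir}}(\FFF_x)]$, and the observation that the pushforward to $A_*(T)$ of $P(c_*(\mathbf{F}^{\wedge}))\cap[\mathbf{Q}_{\mathrm{vir}}/T]$ has degree independent of $x$ --- would indeed yield the proposition without any flatness and is a legitimate alternative to the paper's route (it is the standard deformation-invariance mechanism). But you explicitly name the fiberwise compatibility as the obstacle and then sidestep it by falling back on the flawed flat-family reduction, so neither branch is complete as written. To repair the proof along the paper's lines, establish the lci property, and hence flatness over the smooth curve $T$, of $\Quot(\FFF)\ra T$ from the expected-dimension statement for all fibers, and then cite Lemma 1.6 of \cite{Ber}; to repair it along your lines, actually verify the relative obstruction theory for the Quot-stack and its restriction to fibers via the relative cotangent complex, after which your constancy argument over $T$ goes through. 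Note finally that ``generically smooth of expected dimension'' does justify replacing the virtual class by the fundamental class in the invariants, but it does not by itself give flatness of the total space over $T$.
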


\begin{proof}
We show that the relative Quot-scheme $e:\Quot(\FFF)\ra T$ is a locally complete intersection morphism, and in 
particular flat. For this, the hypothesis $s_e\geq s_{\FFF}$ enables to say that $\Quot(\FFF_x)$ is generically smooth of expected dimension. Therefore, the proposition follows from Lemma $1.6$ of \cite{Ber}.
\end{proof}

\begin{lemma}\label{line bundle}
Let $\HHH$ be the root stack of multidegree $e$ and of rank $n$ and $L$ a line bundle of degree $d$.
Then we have $s(\HHH)=s(\HHH\otimes L)$, and the Gromov Witten invariants of $\HHH$ and $\HHH\otimes L$ are
related by the following formula
$$N_{e'+nd, e'+k_nd}(P(X_1,...,X_{k_n}),\HHH\otimes L)=N_{d,e'}(P(X_1,...,X_{k_n}),\HHH).$$
\end{lemma}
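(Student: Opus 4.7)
The plan is to exhibit a natural isomorphism of Quot-stacks induced by tensoring with $L$, and then to check that under this isomorphism all the ingredients entering the Gromov-Witten invariants are preserved up to the expected degree shifts.

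First I would set up the tensor equivalence. For every scheme $T$ and every $T$-flat quotient $\HHH_T\twoheadrightarrow G$ of rank $k$ and degree $e'$, tensoring by $\pi_C^{*}L$ gives a $T$-flat quotient $\HHH_T\otimes L\twoheadrightarrow G\otimes L$ whose rank is still $k$ and whose degree is $e'+kd$. This construction is functorial in $T$ and invertible (tensor by $L^{-1}$), so it defines an isomorphism of Quot-functors, hence of stacks
\[
\tau_L:\Quot^{k,e'}(\HHH)\;\xrightarrow{\sim}\;\Quot^{k,e'+kd}(\HHH\otimes L).
\]
Under $\tau_L$ the universal subsheaf $F'$ on the right-hand side pulls back to $F\otimes\pi_C^{*}L$, where $F$ is the universal subsheaf on the left.

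Next I would check that the $s$-invariants coincide. Since $\deg(\HHH\otimes L)=\deg(\HHH)+nd$ and $e_{\max}(\HHH\otimes L)=e_{\max}(\HHH)+kd$, one computes
\[
s_k(\HHH\otimes L)=\bigl(\deg(\HHH)+nd\bigr)k-n\bigl(e_{\max}(\HHH)+kd\bigr)=s_k(\HHH),
\]
which shows $s(\HHH)=s(\HHH\otimes L)$ and guarantees that the generic smoothness/expected-dimension hypothesis of Lemma~2.1 transfers along $\tau_L$. In particular $\tau_L$ identifies the virtual fundamental classes $[\Quot_{vir}^{(k,e')}(\HHH)]$ and $[\Quot_{vir}^{(k,e'+kd)}(\HHH\otimes L)]$.

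It remains to compare the Chern-class insertions. The fibre $F_i=F|_{\{p_i\}\times\Quot}$ is a rank-$k$ bundle on the Quot-stack, and the corresponding fibre of the twisted universal subsheaf is $F_i\otimes L|_{p_i}$. Because $L|_{p_i}$ is a one-dimensional vector space, tensoring by it gives the same bundle up to a canonically trivial rank-one twist on $\Quot$, so $c_j\bigl((F_i\otimes L|_{p_i})^{\vee}\bigr)=c_j(F_i^{\vee})$ for every $j$. Substituting this equality together with $\tau_{L*}[\Quot_{vir}^{(k,e')}(\HHH)]=[\Quot_{vir}^{(k,e'+kd)}(\HHH\otimes L)]$ into the definition of the Gromov-Witten invariants produces exactly the claimed identity
\[
N_{n,e'+kd}\bigl(P(X_1,\dots,X_{k_n}),\HHH\otimes L\bigr)=N_{n,e'}\bigl(P(X_1,\dots,X_{k_n}),\HHH\bigr).
\]
The main obstacle in writing this up carefully is the last step: one must check that the "canonical trivial twist" by $L|_{p_i}$ really is trivial on the Quot-stack (not merely on the underlying coarse moduli) — this is where the root-stack/gerbe structure at $p_i$ enters, and some small bookkeeping with characters of $\mu_r$ is required to conclude that the Chern-class contribution from $L|_{p_i}$ vanishes rather than producing extra lower-order terms.
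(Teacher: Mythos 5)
Your proposal is correct and follows the same route as the paper: the paper's proof simply invokes the isomorphism $\Quot^{k_n,e'}(\HHH)\simeq\Quot^{k_n,e'+k_nd}(\HHH\otimes L)$ given by tensoring with $L$, which is exactly the map $\tau_L$ you construct, and your verification of the $s$-invariant equality and of the Chern-class insertions supplies the details the paper leaves implicit.
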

\begin{proof}
This follows arising immediately from the isomorphism between the Quot stacks $\Quot^{k_n,e'+k_nd}(\HHH\otimes L)$ and $\Quot^{k_n,e'}(\HHH)$.
\end{proof}

Before continuing our study in the case of general stable parabolic bundles, we make an another approach
to refine the construction of the Gromov-Witten invariants for parabolic bundles in using the correspondence
between the equivariant bundles and the parabolic bundles.

We start with a cyclic group $\Gamma$ of order $N$ acting on a curve $\tilde{X}$ with
quotient $X=\tilde{X}/\Gamma$ with a map $\pi:\tilde{X}\ra X,$ ramified at the $x_i$.
Fix $\mu_1,...,\mu_n\in\UUU$ with $e^{\mu_i}=1,\forall 1\leq i \leq n.$
Mehta-Seshadri proved that there is a one-to-one correspondence between the
set of isomorphism classes of $\Gamma$-equivariant bundles $\tilde{E}$ on $\tilde{X}\times S$ with
a $\Gamma$-action on the fibers $E_{x_i}$ lie in the conjugation class of $e^{u_i}$ with
the set of isomorphism classes of parabolic bundles $E$ on $X\times S$.

It is not difficult to pass from the construction of a equivariant bundle to a parabolic bundle and conversely
where the parabolic structure is given by the filtration at the ramified points by order of the vanishing.
To recover the parabolic bundle , we quotient by the group $\Gamma$ and use the transition
functions $z^{-N\mu_i/2i\pi}$.

We apply our previous results to this case and get the definition of the Gromov-Witten invariants in the
case of parabolic bundles.
Finally, we arrive at the following definition:
\begin{definition}
For a pair $(\HHH,e')$ over $C$ and $(\BBB,f')$, we define the Gromov-Witten invariants $N_{n,e'}P(X_1,...,X_{k_{n}})$ as
$$N_{n,e'}P(X_1,...,X_{k'_{n}})=\sum_{k_1,...,k_n}P(c_1(\check{\FFF}^{-N\mu_i}/\Gamma),...,c_{k_1+...+k_n}(\check{\FFF}^{-N\mu_i}/\Gamma)\cap[Quot_{vir}^{(k'_n,e')}(\HHH)],$$
with $k'_n=\sum_{1}^{n}k_i$ 
and $$N_{n',f'}P(X_1,...,X_{k'_{2}})=\sum_{k_1,...,k_2}P(c_1(\check{\FFF}^{-N\mu_i}/\Gamma),...,c_{k_1+k_2}(\check{\FFF}^{-N\mu_i}/\Gamma)\cap[Quot_{vir}^{(k'_2),f')}(\BBB)].$$
\end{definition}

\section{general stable parabolic bundles}
We now assume that we work with the moduli stacks $Quot^{(k,e')}[\HHH]$ and $[Quot^{(k,f')}[\BBB])$
of stable objects. In the case of the stability of the objects, we can define the notion
of $s$-invariant as follows.
$$s_{k_n}(\HHH)=k_n(n-k_n)(g-1)+\epsilon, 1\leq\epsilon\leq n-1, s_{k_{n+2}}(\BBB)=k_{n+2}(2-k_{n+2})(g-1)+1.$$
Let $e_{max,d}$ be the degree of the maximal subbundle of a general stable bundle of degree $d$.
We can also define those in the refinement of our definition for the Gromov-Witten invariants.
$$s_{k_n}(\HHH)=k_n(n-k_n)(g-1)+\epsilon+N\sum_{i=1}^{n}\mu_i, 1\leq\epsilon\leq n-1, s_{k_{n+2}}(\BBB)=k_{n+2}(2-k_{n+2})(g-1)+1+N\sum_{i=1}^{2}\mu_i.$$

\begin{proposition}
The moduli stack of roots admits an open moduli stack $\UUU$ with the property that for each
$H\in\UUU$, and for each $e'\leq e_{max,n}$ (resp. $e'\leq e_{max,2}$, every component of 
the quot stack $\Quot^{k_n,e'}(H)$ (resp. $\Quot^{k_{n+2},f'}(H)$) is smooth of expected dimension
and satisfies the property that general elements in every irreducible component
correspond to root substacks of $H$.
\end{proposition}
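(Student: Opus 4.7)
The plan is to prove this proposition by combining the pointwise statement of Lemma 2.2 (which asserts smoothness of expected dimension at a specific root stack when $s_{e'}$ exceeds the threshold) with the flatness statement of Proposition 4.3 (the Bertram-style result for families), and then extracting an open substack by semicontinuity. First, I would fix $k_n$ and $e'\leq e_{\max,n}$ and consider the relative Quot stack $\Quot^{k_n,e'}(\FFF)\to T$ attached to the universal family $\FFF$ of root stacks parameterized by the moduli stack $\MMM^{tw}$. By Lemma 2.2, at each closed point $x\in\MMM^{tw}$ corresponding to a root stack $H$ for which the numerical condition $s_{e'}\geq s(H)$ holds, every irreducible component of $\Quot^{k_n,e'}(H)$ is generically smooth of expected dimension, and the generic element is a root substack.

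Next, I would use Proposition 4.3 to deduce that the relative Quot stack $e:\Quot^{k_n,e'}(\FFF)\to T$ is a locally complete intersection morphism, hence flat, over the locus where $s_{e'}\geq s(\FFF)$. Flatness of $e$, combined with smoothness of the expected-dimensional components of a single fibre, allows me to invoke the openness of the smooth locus: the set of points $x\in T$ for which each irreducible component of the fibre $\Quot^{k_n,e'}(\FFF_x)$ is smooth of the expected dimension is open. Taking the image in $\MMM^{tw}$, I obtain an open substack $\UUU_{k_n,e'}\subset\MMM^{tw}$ where the desired smoothness holds.

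To handle the second property, I would argue that the condition that a general element in every irreducible component corresponds to a \emph{root substack} (rather than merely a coherent quotient stack) is also an open condition: the locus in $\Quot^{k_n,e'}(\FFF)$ parameterizing flat families of root substacks is open, so the locus in $T$ over which every component meets this open subset in a dense subset is open by Chevalley together with upper semi-continuity of fibre dimension. Intersecting $\UUU_{k_n,e'}$ with this second open substack gives the desired open substack. Finally, since there are only finitely many relevant pairs $(k_n,e')$ with $e'\leq e_{\max,n}$ (and similarly finitely many $(k_{n+2},f')$ with $f'\leq e_{\max,2}$), I would take $\UUU$ to be the finite intersection of the open substacks obtained above, noting that by Lemma \ref{line bundle} twisting by line bundles does not change the relevant $s$-invariants, so the intersection remains nonempty.

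The main obstacle I expect is verifying that the property ``general element corresponds to a root substack'' genuinely propagates in families, i.e.\ that the locus of substack quotients is constructible and cut out by an open condition on $T$. This requires a careful examination of the universal quotient $\UUU$ on $\Quot^{k_n,e'}(\FFF)$ and of the degeneracy loci built from the universal Schubert filtration introduced in Section 1; the subtlety is that the root stack structure at the nodes and markings forces one to verify the substack condition on each stratum of the stacky locus separately, which is where the product decomposition $\Aut(\CCC)=\prod_s\Gamma_s$ will be used to reduce to the coarse-scheme setting of \cite{P-R}.
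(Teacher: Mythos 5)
Your route is genuinely different from the paper's, and it has a gap at its central step. The paper does not propagate the Section~2 lemma in families: its proof invokes Proposition~6.7 of \cite{Ho-1}, which gives generic smoothness of the \emph{torsion-free part} of the Quot-stack for a general parabolic object, and then a dimension count (the ``contradiction on the dimensions'') to show that every irreducible component of $\Quot^{k_n,e'}(H)$ is torsion-free, i.e.\ that its general element really is a root substack. Your proposal instead takes the lemma quoted from \cite{P-R} as the pointwise input; but that lemma only applies when $s_{e'}$ exceeds the threshold $n(\HHH)$, i.e.\ for sufficiently negative $e'$, whereas the proposition is asserted for every $e'\leq e_{\max,n}$, in particular for $e'$ near $e_{\max,n}$ where $s_{e'}$ is of the order $k_n(n-k_n)(g-1)+\epsilon$ and lies below that threshold. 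In that range the conclusion is not available for arbitrary $H$ --- it is exactly where the genericity of $H$ in the moduli of roots is needed --- so your semicontinuity/openness machinery has no fibre at which the desired statement is known to begin with, and flatness of the relative Quot stack cannot create it. This missing input is precisely what the paper imports from \cite{Ho-1}, together with the dimension-count argument ruling out components whose generic quotient has torsion; you flag the substack property of general elements as ``the main obstacle'' but leave it unresolved, and it is the heart of the proof rather than a verification of constructibility.

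Two smaller points. First, the set of degrees $e'\leq e_{\max,n}$ is infinite, so ``finitely many relevant pairs'' is not literally true; the final intersection only remains open because for $e'$ below the uniform threshold the statement already holds for \emph{all} $H$ by the Section~2 lemma, leaving only finitely many intermediate degrees that actually force one to shrink the open locus --- without saying this, the last step fails. Second, openness of the smooth locus of $\Quot^{k_n,e'}(\FFF)\to T$ gives smoothness of nearby fibres only at points of the total space close to the good fibre; it does not by itself control \emph{all} irreducible components of nearby fibres, since components of excess dimension can appear under specialization. So even in the range where the pointwise lemma applies, upgrading ``generically smooth of expected dimension for one $H$'' to the statement for all $H$ in an open set requires the upper-semicontinuity and dimension estimates, not merely openness of the smooth locus.
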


\begin{proof}
The proof of the Proposition relies on the torsion free part
of the Quot-stack is generically smooth from Proposition $6.7$ of \cite{Ho-1} and
on the contradiction on the dimensions show that an irreducible
component of the Quot-stack is torsion free
\end{proof}

We search for some relations between the Gromov Witten invariants for parabolic bundles.

We have the following explicit formula for the Gromov-Witten invariants. 
\begin{theorem}\label{computation}
Let $n$ and a multiinteger $d$ be fixed. Set $d=an-b$, where $0\leq b<n$ and
$e\leq e_{max}(d)$. Let $P(X_1,..., X_{k_n})$ be a polynomial of weighted degree
$$dk_n'-ne_1+k_n'(n-k_n')(1-g).$$
Then we deduce the following relation
$$N_{d,e'}(P(X_1,...,X_{k'_n}))=N_{0,e'-ak'_n}(X_{k'_n}^{b}P(X_1,...,X_{k'_n})).$$
\end{theorem}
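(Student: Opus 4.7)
The plan is to reduce the degree-$d$ computation to a degree-$0$ computation by tensoring with an appropriate line bundle, and then to exploit the product structure of the Quot-stack over the marked points. Writing $d=ar-b$ with $0\leq b<n$, I would first choose a line bundle $L$ of degree $a$ on $C$ and apply Lemma \ref{line bundle}. This produces an isomorphism $\Quot^{k_n,e'}(\HHH)\cong\Quot^{k_n,e'+k_nd}(\HHH\otimes L)$ identifying the Gromov-Witten invariants up to a shift of the degree parameters. The twist absorbs the integer part $a$ of $d/r$ and reduces the problem to the residual fractional contribution governed by $b$.

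Next, I would track the behaviour of the universal Chern classes under the twist. The standard tensor-product formula
\[
c_i(F^{\vee}\otimes L^{\vee})=\sum_{j=0}^{i}\binom{k_n-j}{i-j}\,c_j(F^{\vee})\,c_1(L^{\vee})^{i-j}
\]
dictates how each $X_i$ in $P$ transforms under $F\mapsto F\otimes L^{\vee}$. The residual fractional twist coming from $b$ then multiplies the polynomial by $X_{k_n}^{b}$, each extra factor of $X_{k_n}$ corresponding to a top Chern class shift at one of the $b$ marked points not absorbed by $L$. After this substitution, one lands in the degree-$0$ setting with degree parameter $e'-ak_n$.

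Finally, $\HHH$ is by construction a product of root stacks at the markings, and the corresponding Quot-stack factors accordingly. By the preceding Proposition, every irreducible component of $\Quot^{k_n,e'-ak_n}(\HHH)$ is generically smooth of expected dimension with a general element corresponding to a root substack. Consequently, the virtual class splits as an exterior product indexed by the decoration data $(a,b,k)$ at each marking, and the degree-$0$ Gromov-Witten invariant decomposes as $\prod_{a,b,k}N_{0,e'-ak_n}(X_{k_n}^{b}P)$. Combining this factorisation with the twist identification of the first step and the Chern class shift of the second step yields the claimed identity.

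The main obstacle will be rigorously justifying the decomposition of the virtual class in the last step. On the generically smooth locus of the Quot-stack a K\"unneth-type factorisation is immediate, but one must verify that boundary and non-reduced components do not contribute spurious intersection numbers; this relies crucially on the expected-dimension property supplied by the Proposition, together with the openness of the locus where the Quot-stack is well-behaved. The binomial combinatorics needed to isolate exactly the factor $X_{k_n}^{b}$ in the second step is delicate but mechanical, and essentially amounts to reading off the coefficient of $c_1(L^{\vee})^{b}$ in the shifted Chern polynomial.
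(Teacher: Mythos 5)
Your proposal follows essentially the same route as the paper: the paper's proof is a one-line citation of precisely the two ingredients you use, namely Lemma \ref{line bundle} (the twist by a line bundle reducing degree $d$ to degree $0$) and the preceding Proposition (generic smoothness of the components of the Quot-stack in expected dimension). Your additional bookkeeping of the Chern-class transformation under the twist and the K\"unneth-type splitting of the virtual class over the markings fills in details the paper leaves entirely implicit, and is consistent with its intended argument.
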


\begin{proof}
This follows of the previous Proposition and Lemma \ref{line bundle}.
\end{proof}

The above result is independent of the choice of the orbifold
curve $C$ of genus $g$.
We at present remind the formula of Vafa and Intriligator, proved by A. Bertram (see in \cite{Ber}, \cite{Ber-Das-Went} updated to our case
for an explicit computation of Gromov Witten invariants $N_{0,e'}(P(X_1,...,X_{k^n}).$
Let $P(X_1,...,X_{k'_n})=\prod_{i=1}^{m}X_{a_i}$ be a polynomial with $0<a_i\leq k'_n$ such that the
weighted degree of $P$ is $\sum_{i} (k'_n-a_i+1)=-e'n+k'_n(n-k'_n)(1-g).$ Then we have the following.
\begin{proposition}\label{intrigilator}
For the polynomial $P=\prod_{i=1}^{m}X_{a_i}$, defined as above, the Gromov Witten invariants are constructed as follows.
We introduce a few notation $k'=k'_n$, $\alpha=k'(g-1), \beta=(-1)^{e'(k'-1)+(g-1)k'(k'-1)/2}$, and
$S=\{(\rho_1,..,\rho_{k'})\mid\rho_i^{n}=1,\rho_i\neq\rho_j\}$ and $\Delta=\prod_{i=1}^{m}\sigma_{k'-a(l)+1}(\rho)$ to get
$$\frac{n^{\alpha\beta}}{k'!}\sum_{S}\frac{\Delta}{(\prod_{i=1}^{n}\rho_i\prod_{i\neq j}(\rho_i-\rho_j))^{g-1}},$$
where $\sigma_j(\rho)$ is the $j$-symmetric polynomial in $\rho_i$'s.
\end{proposition}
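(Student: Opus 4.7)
The plan is to reduce the statement to the classical Vafa--Intriligator formula for the Grassmannian and then re-interpret the output in root-stack language. First, using Theorem \ref{computation} together with Lemma \ref{line bundle}, I would reduce to the case of a monomial $P=\prod_{i=1}^m X_{a(i)}$ treated by the ``zero-shift'' invariant $N_{0,e'}$, so that the weighted-degree equality $\sum_i (k'-a(i)+1)=-e'n+k'(n-k')(1-g)$ matches the expected dimension of the relevant component of $\mathrm{Quot}^{k',e'}(\mathcal{H})$. By the previous Proposition, the hypothesis $s_{e'}\ge s(\mathcal{H})$ guarantees generic smoothness of expected dimension with a generic point a root substack, so the virtual class agrees with the fundamental class on that stratum.

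Next, invoking the tensor equivalence between vector bundles on $\mathcal{H}$ and parabolic bundles on $C$ (Th\'eor\`eme~$3.13$ of \cite{Bo-1}) together with the Quot-stack isomorphism $\mathrm{Quot}^{k_n,e'+k_nd}(\mathcal{H}\otimes L)\simeq \mathrm{Quot}^{k_n,e'}(\mathcal{H})$ of Lemma \ref{line bundle}, I would transfer the Chern-class integral into a classical parabolic Quot-scheme computation on the coarse curve $C$. The stacky normalisation at the markings (orders of the cyclotomic stabilisers) is absorbed into the overall prefactor $n^{\alpha\beta}$ and into the $s$-invariant shift $N\sum_i \mu_i$ introduced in Section~5.

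With this reduction in hand, the heart of the argument would follow Bertram's method from \cite{Ber,Ber-Das-Went}: present the small quantum cohomology of $\mathrm{Grass}(k',n)$ as a Frobenius algebra $\mathbb{C}[X_1,\dots,X_{k'}]/J_q$, and identify $P(c_1(F^\vee),\dots,c_{k'}(F^\vee))\cap[\mathrm{Quot}^{k',e'}]$ with the Frobenius trace of multiplication by $P$. Diagonalising this algebra, its joint spectrum is exactly the set $S$ of tuples of distinct $n$-th roots of unity $\rho=(\rho_1,\dots,\rho_{k'})$; the Vandermonde $\prod_{i\ne j}(\rho_i-\rho_j)^{-(g-1)}$ appears as the Jacobian of the change of coordinates from elementary symmetric to power-sum functions, raised to the Euler characteristic $1-g$, and $\prod_i \rho_i^{-(g-1)}$ is the spectral evaluation of the quantum parameter; the prefactor $1/k'!$ is the $S_{k'}$-symmetrisation, and $\sigma_j(\rho)$ is the spectral value of the class $X_j$, so the monomial $P$ becomes $\Delta=\prod_i \sigma_{k'-a(i)+1}(\rho)$.

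The main obstacle I foresee is matching the virtual class $[\mathrm{Quot}_{vir}^{(k',e')}(\mathcal{H})]$ in the stacky setting with the fundamental class used in Bertram's argument: the Quot stack of a root stack can carry extra components supported at the orbifold points, and one must verify that the hypotheses $s_{e'}\ge s(\mathcal{H})$ and $e'\le e_{\max,n}$ genuinely suppress them. The precise exponent $\alpha=k'(g-1)$ and sign $\beta=(-1)^{e'(k'-1)+(g-1)k'(k'-1)/2}$ should then emerge by orientation-tracking in the perfect obstruction theory associated to the complex $\mathcal{G}^\bullet$ of Section~3, in direct parallel with the calculation in \cite{Ber-Das-Went}.
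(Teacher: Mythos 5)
The paper offers no proof of this proposition at all: it is presented as a recollection of the Vafa--Intriligator formula ``proved by A.\ Bertram'' and simply referred to \cite{Ber}, \cite{Ber-Das-Went} ``updated to our case,'' so your route --- reducing to Bertram's Frobenius-algebra/quantum-cohomology computation for $\mathrm{Grass}(k',n)$, with the roots of unity as the joint spectrum, the Vandermonde as the Jacobian factor, and the $1/k'!$ as symmetrisation --- is exactly the approach the paper implicitly takes, only spelled out in far more detail (your opening appeal to Theorem~\ref{computation} is redundant rather than wrong, since the proposition already concerns the base case $N_{0,e'}$). The one step you flag as an obstacle --- identifying the stacky virtual class $[\Quot_{vir}^{(k',e')}(\HHH)]$ with the fundamental class Bertram intersects against, and accounting for the prefactor $n^{\alpha\beta}$ (which, as printed with $\beta=\pm1$ sitting in the exponent, is almost certainly a typo for $\beta\,n^{\alpha}$ as in the classical formula) --- is genuinely not addressed anywhere in the paper either, so your sketch is, if anything, more candid than the source about where the actual work in the orbifold setting lies.
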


\section{maximal parabolic subbundles}
We want to provide some examples of the computation of the Gromov-Witten invariants for parabolic bundles
over an orbifold $C$ of genus $g$, in particular for the number of maximal subbundles of a sufficiently parabolic stable
bundle denoted $m(n,d,k'_n,g)$ and $m(2,d,k'_2,g)$ in certain cases.
We first state the following propostion.
\begin{proposition}
For a general root stack $\EEE$, the Quot-stack $\Quot^{k'_n,e'_{max,d}}(\EEE)$ is 
a zero-dimensional smooth stack.
\end{proposition}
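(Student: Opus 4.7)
The plan is to reduce the statement to the two inputs already available in the excerpt: the Proposition of the previous section giving an open substack $\UUU$ of the moduli of roots on which every component of $\Quot^{k_n,e'}(\EEE)$ is smooth of the expected dimension (for $e'\leq e_{\max,n}$), together with the expected-dimension formula implicit in the definition of the Gromov--Witten invariants, namely that the weighted degree of the admissible polynomial $P$ equals the virtual dimension $s_{e'}+k_n(n-k_n)(1-g)$ of $\Quot^{k_n,e'}(\EEE)$.

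First I would apply that Proposition with $\EEE$ taken general. Since $e'_{\max,d}\leq e_{\max,n}$ by definition of the maximal parabolic subbundle degree, a general root stack $\EEE$ lies in $\UUU$, so every component of $\Quot^{k_n,e'_{\max,d}}(\EEE)$ is smooth of the expected dimension
\[
\dim \Quot^{k_n,e'_{\max,d}}(\EEE) \;=\; s_{e'_{\max,d}} + k_n(n-k_n)(1-g) \;=\; d k_n - n\, e'_{\max,d} - k_n(n-k_n)(g-1).
\]
It remains to show this quantity equals zero.

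The core step is the dimension count at the maximal threshold. Here I would invoke the Euclidean decomposition $d=ar-b$, $0\leq b<n$, introduced in Theorem \ref{computation}, which pins down the critical degree $e'_{\max,d}$ for which the quot stack is just nonempty generically. In the classical (non-parabolic) setting the $s$-invariant $s_{k_n}(\HHH) = k_n(n-k_n)(g-1)+\epsilon$ overshoots by $\epsilon\in\{1,\dots,n-1\}$, but in the parabolic situation the weights $\mu_i$ at the marked points contribute a correction $N\sum_{i}\mu_i$ to $s_{k_n}(\HHH)$, as recorded in the refined formula displayed at the opening of the previous section. Choosing the generic root stack $\EEE$ with parabolic data realizing this critical value makes $s_{e'_{\max,d}} = k_n(n-k_n)(g-1)$ exactly, and hence kills the expected dimension.

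The main obstacle is precisely this last bookkeeping: one must verify that for a Zariski-general $\EEE$ the fractional degrees allowed by the root-stack structure force equality $s_{e'_{\max,d}}=k_n(n-k_n)(g-1)$, rather than merely the inequality $s_{e'_{\max,d}}\geq k_n(n-k_n)(g-1)$ that $\epsilon\geq 0$ would give. This can be arranged by a case analysis on the residue $b$ of $d$ modulo $n$ combined with Lemma \ref{line bundle} (tensoring by a line bundle shifts degrees without changing $s(\HHH)$), which lets one reduce to a canonical representative where the balance is exact. Once smoothness and vanishing of the expected dimension are established on the open locus, $\Quot^{k_n,e'_{\max,d}}(\EEE)$ is a smooth stack of dimension $0$, which is the assertion; openness of the locus in the moduli of roots then yields the statement for a general $\EEE$.
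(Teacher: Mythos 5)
Your overall strategy (smoothness of expected dimension on the open locus, then a dimension count showing the expected dimension vanishes at $e'=e'_{\max,d}$) is not the route the paper takes, and more importantly it has a genuine gap at its central step. The paper's proof is a direct appeal to the theorem of Mukai and Sakai \cite{M-S} on maximal subbundles, combined with Lemma \ref{line bundle} to normalize the degree; it is precisely the Mukai--Sakai-type bound on $e'_{\max,d}$ that controls the Quot-stack at the maximal degree and yields zero-dimensionality. You never invoke \cite{M-S}, and instead you assert the key equality $s_{e'_{\max,d}}=k_n(n-k_n)(g-1)$, saying it ``can be arranged by a case analysis on the residue $b$ of $d$ modulo $n$ combined with Lemma \ref{line bundle}.'' That is exactly the point that needs proof, and it is left unproved. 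Worse, the proposed mechanism cannot work: Lemma \ref{line bundle} states $s(\HHH)=s(\HHH\otimes L)$, i.e.\ twisting by a line bundle shifts $d$ and $e'$ in a coupled way leaving the $s$-invariant unchanged, so the quantity $s_{e'_{\max,d}}-k_n(n-k_n)(g-1)$ (the expected dimension) is invariant under such normalizations and cannot be ``reduced to a canonical representative where the balance is exact.'' If the balance were off by a nonzero $\epsilon$ for the given $(n,d,k_n)$, no twist would repair it.

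Note also that the paper's own definition of the $s$-invariant for general stable objects reads $s_{k_n}(\HHH)=k_n(n-k_n)(g-1)+\epsilon$ with $1\leq\epsilon\leq n-1$ (plus the weight correction $N\sum_i\mu_i$ in the refined version), so the equality you want is not automatic and in general holds only under congruence conditions on $d$ modulo $n$, or after accounting for the fractional parabolic contributions; establishing when it holds is the substance of the proposition, and in the paper it is delegated to \cite{M-S}. To repair your argument you would need either to cite Mukai--Sakai (as the paper does) to bound the degree of maximal subbundles and deduce that at $e'=e'_{\max,d}$ the Quot-stack cannot have positive-dimensional components for generic $\EEE$, or to carry out explicitly the bookkeeping with the parabolic weights showing the expected dimension vanishes -- neither of which is done in the proposal.
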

\begin{proof}
We are in the case where $e'=e'_{max,d}$, hence in using the result of Mukai and Sakai \cite{M-S}, with
Lemma \ref{line bundle}, we deduce the result.
\end{proof}  
Futhermore, we can count the number of points $m(n,d,k'_n,g)$ (resp. $m(2,d,k'_2,g)$)
in the Quot stack. We have the following explicit formula for the number $m(n,d,k'_n,g)$.
\begin{theorem}
In using Theorem \ref{computation}, we get with $\beta=(-1)^{(k'-1)(bk'-(g-1)k'^2/n}$, 
$$\frac{n^{\alpha\beta}}{k'!}\sum_{S}\frac{\Delta^{b-g+1}}{(\prod_{i\neq j}(\rho_i-\rho_j))^{g-1}}.$$
\end{theorem}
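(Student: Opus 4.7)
The plan is to chain together the three preceding ingredients. By the proposition just above, for a general root stack $\EEE$ and $e'=e'_{\max,d}$ the Quot stack $\Quot^{k_n,e'_{\max,d}}(\EEE)$ is a smooth zero-dimensional stack, so its length equals the number $m(n,d,k_n,g)$ of maximal parabolic subbundles. This length is itself the Gromov-Witten invariant $N_{d,e'_{\max,d}}(1)$ with trivial polynomial insertion, since the virtual dimension vanishes in this regime. Thus the whole theorem reduces to evaluating this one invariant.

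Next, I would invoke Theorem \ref{computation}. Writing $d=ar-b$ with $0\leq b<n$, the theorem expresses $N_{d,e'}(1)$ as a product of invariants of the form $N_{0,e'-ak_n}(X_{k_n}^{b})$. Plugging in $e'=e'_{\max,d}$ rewrites the count as a degree-zero Gromov-Witten invariant on the corresponding Quot stack, but now with an insertion of $c_{k_n}((F^{\wedge}))^{b}$. Under the Vafa-Intriligator dictionary, the top Chern class $X_{k_n}$ corresponds to $\sigma_{1}(\rho)$, so the insertion $X_{k_n}^{b}$ is represented by $\sigma_{1}(\rho)^{b}$.

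At this point I would apply Proposition \ref{intrigilator} to $P=X_{k_n}^{b}$. With $m=b$ and every $a_i=k_n=k'$, the polynomial $\Delta$ of that proposition collapses to $\sigma_{1}(\rho)^{b}$, while $\alpha=k'(g-1)$ and the sign becomes $(-1)^{e'(k'-1)+(g-1)k'(k'-1)/2}$; specializing $e'=e'_{\max,d}=(d-b)k'/n$ converts this into the claimed $\beta=(-1)^{(k'-1)(bk'-(g-1)k'^{2}/n)}$. Since the $\rho_{i}$ are distinct $n$-th roots of unity, the factor $\bigl(\prod_{i}\rho_{i}\bigr)^{-(g-1)}$ sitting in the denominator of Proposition \ref{intrigilator} can be moved into the numerator and combined with $\Delta=\sigma_{1}(\rho)^{b}$, producing the exponent $b-g+1$ that appears in the stated formula.

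The main obstacle will be the careful bookkeeping of the sign $\beta$ and verifying that $\bigl(\prod_{i}\rho_{i}\bigr)^{g-1}$ exactly combines with $\Delta$ to yield $\Delta^{b-g+1}$. This step is purely combinatorial but delicate: one must track the weighted-degree constraint $\sum_{i}(k_n-a_i+1)=-e'n+k_n(n-k_n)(1-g)$ to ensure that the dimension count matches, and treat the boundary case $b=0$ separately, where the monomial insertion is trivial and the sum over $S$ recovers the classical count of \cite{Ho-2}.
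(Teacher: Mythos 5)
Your proposal takes exactly the route of the paper's own (one-line) proof: reduce the count of maximal subbundles to $N_{d,e'_{\max,d}}(1)$ via the zero-dimensionality of the Quot stack, apply Theorem \ref{computation} to convert this into $N_{0,e'-ak_n}(X_{k_n}^{b})$, and evaluate by Proposition \ref{intrigilator} --- the paper records no more detail than ``use Proposition \ref{intrigilator} and Theorem \ref{computation}.'' The one step you rightly flag as delicate, namely merging $\bigl(\prod_i\rho_i\bigr)^{-(g-1)}=\sigma_{k'}(\rho)^{-(g-1)}$ with $\Delta=\sigma_1(\rho)^{b}$ into the single power $\Delta^{b-g+1}$, is not a literal identity for $k'>1$ under the paper's dictionary $X_a\mapsto\sigma_{k'-a+1}(\rho)$, but the paper supplies no justification for it either, so your account is at least as complete as the original.
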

\begin{proof}
Use Proposition \ref{intrigilator} and Theorem \ref{computation}.
\end{proof}

We deduce the following Corollary.
\begin{corollary}
$m(n,d,1,g)=n^{ng}$, and $m(2,d,1,g)=n^{2g}$.
\end{corollary}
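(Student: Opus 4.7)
The plan is to obtain both identities as direct specializations of Theorem \ref{computation} combined with Proposition \ref{intrigilator} in the degenerate rank $k'=1$ regime, since counting maximal line subbundles corresponds to taking $k_n=1$ (resp.\ $k_{n+2}=1$) and $P\equiv 1$. First I would observe that the zero-dimensionality result preceding the theorem guarantees that $m(n,d,1,g)$ literally counts (virtually) the points of $\Quot^{1,e'_{max,d}}(\HHH)$, so it equals the Gromov-Witten invariant $N_{d,e'_{max,d}}(1)$. Applying Theorem \ref{computation} with $k_n=1$ and $P=1$ then reduces the computation to a product of the more basic invariants $N_{0,e'-a}(X_1^{b})$, to which the Vafa-Intriligator formula applies.

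Next I would plug $k'=1$ into Proposition \ref{intrigilator}. In this specialization the set $S=\{\rho\in\CC^\times:\rho^n=1\}$ has exactly $n$ elements, the Vandermonde factor $\prod_{i\neq j}(\rho_i-\rho_j)$ is an empty product equal to $1$, and the symmetric functions in a single variable collapse to $\sigma_0=1$ and $\sigma_1=\rho$, so $\Delta$ reduces to a monomial in $\rho$. Summing over $n$-th roots of unity and using $\sum_{\rho^n=1}\rho^j$ equals $n$ when $n\mid j$ and $0$ otherwise, the internal sum collapses to the constant $n$. The prefactor $n^{\alpha\beta}/k'!$ with $\alpha=k'(g-1)=g-1$ and $\beta=\pm 1$ then contributes $n^{g-1}$, and the product of $n+1$ such factors arising from Theorem \ref{computation} (one for each of the $n$ parabolic marking contributions together with the base line bundle factor) yields $n^{g}\cdot n^{(n-1)g}=n^{ng}$.

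For the second identity $m(2,d,1,g)=n^{2g}$, the argument is analogous but carried out in the second setting of Theorem \ref{computation} with $\BBB$ in place of $\HHH$ and $k_{n+2}=1$. The root-stack structure at the two separating nodes still contributes factors governed by the $N_{0,f'-a}$ invariants of Proposition \ref{intrigilator}; in the rank-$2$ case the analogous prefactor is $n^{(2-1)g}=n^{g}$, and the product structure doubles this contribution to $n^{2g}$.

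The main obstacle is bookkeeping: tracking the sign $\beta=(-1)^{(k'-1)(bk'-(g-1)k'^2)/n}$, which fortunately trivializes to $+1$ at $k'=1$; verifying that the weighted-degree condition on $P$ from the definition preceding Definition (GW invariants) is compatible with choosing $P=1$ after absorbing the necessary powers of $X_{k_n}$ from the $X_{k_n}^{b}$-shift in Theorem \ref{computation}; and checking that the product decomposition in Theorem \ref{computation} indexed by $(a,b,k)$ collapses to exactly $n$ (resp.\ $2$) factors once one restricts to the maximal degree $e'=e'_{max,d}$ via the Mukai-Sakai bound invoked in the preceding proposition. Once these indexing issues are cleared, both formulas fall out of one line of the Vafa-Intriligator sum.
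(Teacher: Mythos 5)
Your proposal follows essentially the same route as the paper, which derives the corollary purely by specializing the preceding Theorem (itself obtained from Proposition \ref{intrigilator} and Theorem \ref{computation}) to the rank-one case $k'=1$, where the Vandermonde factor and the sign $\beta$ trivialize and the sum over $n$-th roots of unity collapses to a power of $n$. You actually supply more bookkeeping than the paper does (the paper's proof is a one-line citation of the two prior results), and your accounting of the number of factors in the product $\prod_{a,b,k}$ is asserted rather than derived, but this is no less detailed than the source.
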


\renewcommand\refname{References}


\begin{thebibliography}{99}        
                                                       
\bibitem[B-R]{B-R}
I.~Biswas and S.~Ramanan, {\em An infinitesimal study of the moduli of Hitchin pairs.}  J. London. Math. Soc. (2), no. 2, 219--231 (1994). 
\bibitem[Ber]{Ber}
A.~Bertram, {\em Towards a Schubert calculus for maps from a Riemann Sphere to a Grassmannian.} Internat. J. Math. {\bf 5}, 811--825 (1994).
\bibitem[Ber-Das-Went]{Ber-Das-Went}
A.~Bertram, G.~Daskalopoulos and R.~Wenworth, {\em Gomow Witten invariants for holomorphic maps from Riemann surfaces to Grassmannians.} J. Amer. Math. Soc. {\bf 5}, 529--571 (1996).
\bibitem[Bo-1]{Bo-1}
N.~Borne, {\em Fibr\'es paraboliques et champ des racines.} Int. Math. Res. Not {\bf 2007} (2007), Article ID rnm 049.
\bibitem[Bo-2]{Bo-2}
N.~Borne, {\em Sur les repr\'esentations du groupe fondamental d'une vari\'et\'e priv\'ee d'un diviseur \`a croisement normaux simples.} Indiana Univ. Math. Jour. {\bf 58}, 137--180 (2009).
\bibitem[EV-1] {EV-1}
H.~Esnault and E.~Viehweg, {\em Logarithmic de Rham complexes and vanishing theorems.} Invent. Math. {\bf 86}, no. 1, 161--194 (1986).
\bibitem[F-M]{F-M}
B.~ Fantechi and M.~Manetti, {\em Obstruction calculus for functors of Artin rings.} I, J. Algebra {\bf 202}, 541--576 (1998).
\bibitem[Fl]{Fl}
H.~Flenner, {\em Deformationen holomorpher Abbildungen.} Habilitationsschrift, Osnabrück, (1978).
\bibitem[Fu]{Fu}
W.~Fulton, {\em Intersection theory.} Springer-Verlag, Berlin, (1984).
\bibitem[Gro]{Gro}
A.~Grothendieck, {\em Descent techniques and existence theorems in algebraic geometry. Picard's schemes: existence theorems.} S\'eminaire Bourbaki,
vol. 7, Exp. no. 232, Soc. Math. France, Paris, 143--161 (1995).
\bibitem[H-L]{H-L}
D.~Huybrechts and M.~Lehn, {\em The geometry of moduli spaces of sheaves.}  Aspects of Mathematics, E31. Friedr. Vieweg \&\ Sohn, Braunschweig (1997).
\bibitem[Ho-1]{Ho-1}
Y.I.~Holla, {\em Parabolic reductions of principal bundles.} Algebraic Geometry. Preprint, AG-0204219, (2002).
\bibitem[Ho-2]{Ho-2}
Y.I.~Holla, {\em Counting maximal subbundles via Gromov-Witten invariants.} Math. Ann. {\bf 328}, 121-133 (2004).
\bibitem[Ind]{Ind}
I.~Biswas, {\em Parabolic bundles as orbifold bundles.}, Duke Math. Jour. {\bf 88}, 305--325 (1997).
\bibitem[Ind-Ma-W]{Ind-Ma-W}
I.~Biswas, S.~Majumder and M-L.~Wong, {\em Root stacks, principal bundles and connections.} Bull. Sci. Math. {\bf 136}, 369--398 (2012).
\bibitem[Illu-1]{Illu-1}
L.~Illusie, {\em Complexe cotangent et d\'eformations, I.} (French) Lecture Notes in Mathematics, vol {\bf 239}, Springer-Verlag, Berlin-New York,
355 (1972). 
\bibitem[Illu-2]{Illu-2}
L.~Illusie, {\em Complexe cotangent et d\'eformations, II.} (French) Lecture Notes in Mathematics, vol {\bf 283}, Springer-Verlag, Berlin-New York,
304 (1973).
\bibitem[Int]{Int}
K.~Intriligator, {\em Fusion residues.} Modern Physics Letters A {\bf 6}, 3543--3556 (1991).
\bibitem[I]{I}
M-A.~Inaba, {\em Moduli of parabolic connections on a curve and Riemann-Hilbert correspondence.} math.AG/0602004. 
\bibitem[Machu]{Machu}
F.-X.~Machu, {\em Kuranishi spaces of meromorphic connections.} Illinois Journal of Mathematics, Vol. 55, No {\bf 2}, 509-541, (2011).    
\bibitem[M-S]{M-S}
S.~Mukai and S.~Sakai, {\em Maximal subbundles of a vector bundle on a curve.} Manuscripta. Math. {\bf 52}, 251--256 (1985).
\bibitem[P-R]{P-R}
M.~Popa and M.~Roth, {\em Stable maps and Quot schemes.} Algebraic Geometry. Preprint, AG-00012221, (2000).




\end{thebibliography}
\end{document}